\documentclass{article}
\usepackage{amsmath,amsfonts,amssymb,amsthm}
\usepackage{epsfig}
\usepackage{mathrsfs}
\usepackage{enumerate}
\newtheorem{theorem}{Theorem}

\newtheorem{koro}[theorem]{Corollary}

\newcommand{\bd}{{\mathrm{bd}}\,}
\newcommand{\inte}{{\mathrm{int}}\,}
\newcommand{\relint}{{\mathrm{relint}}\,}

\newcommand{\Nor}{{\mathrm{Nor}}\,}

\newcommand{\R}{{\mathbb R}}

\newcommand{\E}{{\mathrm e}}

\newcommand{\Q}{{\mathbb Q}}

\catcode`@=11
\def\section{%
\setcounter{equation}{0} \setcounter{theorem}{0} \@startsection
{section}{1}{\z@}{-4.0ex plus -1ex minus
    -.2ex}{2.3ex plus .2ex}{\bf}}
\catcode`@=12 \theoremstyle{definition}
\newtheorem{example}{Example}

\begin{document}

\title{Integral geometry of translation invariant functionals, I: The polytopal case}

\author{
Wolfgang Weil\\
 \\ {\it Department of Mathematics, 
 Karlsruhe Institute of Technology}\\
 {\it 76128 Karlsruhe, Germany}}

\date{}

\maketitle
\begin{abstract}
\noindent We study translative integral formulas for certain translation invariant functionals on convex polytopes and discuss local extensions and applications to Poisson processes and Boolean models.
\end{abstract}

\section{Introduction}

Classical integral geometry in the sense of Wilhelm Blaschke and his school deals with kinematic integral formulas for intrinsic volumes of convex bodies (see \cite{SW} for examples of such formulas and \cite{SW+} for historical remarks). More recent generalizations concern extensions to various set classes (sets of positive reach, polyconvex sets), local versions (for curvature measures), and variants for other transformation groups (translative integral geometry). In particular, the translative integral formulas for curvature measures of polyconvex sets (finite unions of convex bodies) found important applications in stochastic geometry, namely in the investigation of particle processes and Boolean models without any invariance assumptions. Intrinsic volumes and curvature measures are examples of additive, translation invariant and continuous (real- or measure-valued) functionals on the class ${\cal K}$ of convex bodies in $\R^d$ and the additivity is important for the extension of the functionals and their integral geometric formulas to unions of sets. The additivity is also a necessary requirement for kinematic formulas if they are proved using Hadwiger's celebrated characterization theorem. Therefore, the impression arose that, besides translation invariance and continuity, the additivity is an essential property of functionals like the intrinsic volumes in order to guarantee the validity of translative or kinematic formulas in integral geometry.

Surprisingly, as we shall show, the additivity is not crucial here. Instead of, the fact that intrinsic volumes have a local extension (given by the curvature measures) is of importance. It is the first goal of this work to show that translation invariant functionals $\varphi$ on the class ${\cal P}\subset {\cal K}$ of polytopes that have a local extension $\Phi$ (detailed definitions and formulations will be given in the next section) satisfy a translative integral formula (both, in a global and a local version) which is similar to the one for intrinsic volumes. 

As it turns out, functionals on ${\cal P}$ behave differently from those defined on ${\cal K}$. Namely, any weakly continuous, translation invariant functional $\varphi : {\cal P}\to \R$ which is additive has a local extension. On the other hand, there are many translation invariant  functionals $\varphi$ on ${\cal P}$ which have a local extension but are not additive. For these, the translative integral formulas which we shall prove open the way for applications to Poisson processes and Boolean models with polytopal grains. Such applications are given in the final two sections of this paper.  A particular example of a functional on polytopes $P$ which has a local extension, but is not additive, is given by the total ($k$-dimensional) content of the $k$-skeleton of $P$, $k\in \{ 0,...,d-2\}$. As it turns out, local extensions of translation invariant functionals are not unique (this is even true for the intrinsic volumes). Examples of this kind (on the level of polytopes) will be given at the end of Section 3. It remains an open problem to describe all local extension of a given functional $\varphi$.

Whereas we concentrate on the polytopal case here, functionals on ${\cal K}$ with a local extension will be discussed in the second part of this work. Then, it is natural to impose an additional continuity property. As it turns out, such continuous local functionals on ${\cal K}$ are automatically additive. For continuous local (and thus additive) functionals on ${\cal K}$ we will also show a translative integral formula. It will even hold for arbitrary additive functionals and generalizes results obtained in \cite[Section 11.1]{SW}.

\section{Definitions and results}

Let $\cal K$ be the space of  convex bodies in $\R^d$ (non-empty compact convex sets), supplied with the Hausdorff metric, and let $\cal P$ be the dense subset of convex polytopes. For notions from the theory of convex bodies, we refer to \cite{S}. Throughout this first part, we consider  real functionals $\varphi$ on ${\cal P}$ which are translation invariant. Such a functional $\varphi$ is {\it weakly continuous}, if it is continuous with respect to parallel displacements of the facets of the polytopes. 

We call a functional $\varphi : {\cal P}\to \R$ {\it local}, if it has a {\it local extension}  $\Phi :  {\cal P}\times {\cal B} \to \R$, namely a (measurable) kernel, meaning that $\Phi (\cdot ,A)$ is a measurable function on ${\cal P}$ for each  $A\in{\cal B}$ and $\Phi (P,\cdot)$ is a finite signed Borel measure on ${\R}^d$ for each $P\in {\cal P}$ (here ${\cal B}$ denotes the $\sigma$-algebra of Borel sets in $\R^d$), and such that $\Phi$ has the following properties:
\begin{itemize}
\item $\varphi (P) = \Phi(P,{\mathbb R}^d )$ for all $P\in {\cal P}$,
\item $\Phi$ is {\it translation covariant}, that is, satisfies $\Phi (P+x,A+x) = \Phi (P,A)$ for $P\in{\cal P}$, $A\in{\cal B}$, $x\in {\mathbb R}^d$,
\item $\Phi$ is {\it locally determined}, that is, $\Phi (P,A)=\Phi (Q,A)$ for $P,Q\in{\cal P}$, $A\in{\cal B}$, if there is an open set $U\subset {\mathbb R}^d$ with $P\cap U=Q\cap U$ and $A\subset U$.
\end{itemize}
By definition, a local functional $\varphi$ on $\cal P$ is measurable and translation invariant. 
For  $k\in\{ 0,...,d \}$, we say that $\varphi$ is {\it $k$-homogeneous}, if $\varphi (\alpha P) = \alpha^k \varphi (P)$ holds for all $\alpha\ge 0$ and all $P\in{\cal P}$. Similarly, for a measure-valued functional $\Phi$ on $\cal P$ (like a local extension of $\varphi$), $k$-homogeneity means that $\Phi (\alpha P, \alpha A) = \alpha^k \Phi (P,A)$ holds for all $\alpha \ge 0$, all $P\in{\cal P}$ and all Borel sets $A\subset \R^d$.

We now formulate some of the results of the paper. We recall that $V_j(K)$ denotes the $j$th intrinsic volume of a convex body $K$, $j=0,...,d$; thus $V_d(K)=\lambda (K)$ is the volume of $K$. Here, we also used $\lambda$ to denote the Lebesgue measure in $\R^d$. For a polytope $P$, let ${\cal F}_j(P)$ be the collection of $j$-faces of $P$, $j=0,...,d-1$, and let $n(P,F)$, for a face $F$ of $P$, be the intersection of the normal cone $N(P,F)$ of $P$ at $F$ with the unit sphere $S^{d-1}$ (this is a $(d-j-1)$-dimensional spherical polytope). Let ${\wp}_{d-j-1}^{d-1}$ be the class of $(d-j-1)$-dimensional spherical polytopes (again supplied with the topology of the Hausdorff metric).  For a $j$-dimensional face $F$ of a polytope $P$, let $\lambda_F$ be the restriction to $F$ of the ($j$-dimensional) Lebesgue measure in the affine hull of $F$. We also put ${\cal F}_d(P) = \{ P\}$, if $P$ is $d$-dimensional, and ${\cal F}_d(P) = \emptyset$ otherwise.

\begin{theorem}\label{th1} Let $\varphi$ be a local functional on $\cal P$ with local extension $\Phi$. Then $\varphi$ has a unique representation
\begin{equation}\label{func1}
\varphi (P) = \sum_{j=0}^{d-1} \varphi^{(j)}(P) + c_dV_d(P)
\end{equation}
with $j$-homogeneous local functionals $\varphi^{(j)}$ on $\cal P$ and a constant $c_d\in\R$. Also, $\Phi$ has a unique representation
\begin{equation}\label{func2}
\Phi (P,\cdot) = \sum_{j=0}^{d-1} \Phi^{(j)}(P,\cdot) + c_d\lambda (P\cap\cdot)
\end{equation}
with $j$-homogeneous measure-valued functionals $\Phi^{(j)}$ on $\cal P$ and the same constant $c_d$. 
Each $\Phi^{(j)}$ has the form
\begin{equation}\label{func3}
\Phi^{(j)} (P,\cdot) = \sum_{F\in {\cal F}_j(P)} f_j(n(P,F))\lambda_F
\end{equation}
with a (uniquely determined) measurable function $f_j$ on ${\wp}_{d-j-1}^{d-1}$. We put $f_d=c_d$ and call $f_0,...,f_d$ the {\em associated functions} of $\Phi$. 

For $j=0,...,d-1$, the kernel $\Phi^{(j)}$ is a local extension of $\varphi^{(j)}$ and so
\begin{align}\label{func4}
\varphi^{(j)} (P) &= \sum_{F\in {\cal F}_j(P)} f_j(n(P,F)) V_j(F).
\end{align}

If $\Phi\ge 0$, then $f_j\ge 0, j=0,...,d$, and thus $\Phi^{(j)}\ge 0, \varphi^{(j)}\ge 0$, for $j=0,...,d-1$, and $\varphi\ge 0$.
\end{theorem}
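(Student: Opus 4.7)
The plan is to obtain a face-by-face structural description of $\Phi(P,\cdot)$ and read off the homogeneous decomposition from it. The first preparatory step is to show that $\Phi(P,\cdot)$ is supported on $P$. For a singleton $\{x\}\in{\cal P}$, locality combined with translation covariance forces $\Phi(\{x\},\cdot)$ to be locally translation invariant on $\R^d\setminus\{x\}$ (compare $\{x\}$ and $\{y\}$ for $y$ chosen outside a given Borel set $A\not\ni x$); being a finite signed measure, it therefore vanishes on $\R^d\setminus\{x\}$, so $\Phi(\{x\},\cdot)=c_0\delta_x$. For an arbitrary $P$ and $x\notin P$, a small ball $U\ni x$ disjoint from $P$ is compared via locality to a singleton placed outside $U$, yielding $\Phi(P,U)=0$.

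Next, fix a $j$-face $F$ of $P$ and $x_0\in\relint F$. For $y=x_0+t$ with $t\in\lin(F-x_0)$ small, the tangent cones $T(P,x_0)$ and $T(P,y)$ are translates of one another. Choosing a bounded polytope $Q$ that agrees with $T(P,x_0)$ on a large neighborhood of $x_0$, one has that $Q+t$ agrees with $T(P,y)$ near $y$. Replacing $P$ by $Q$ near $x_0$ and by $Q+t$ near $y$ via locality, and then using translation covariance of $\Phi$ applied to $Q$, gives
\[
\Phi(P,A+t)=\Phi(P,A)\qquad\text{for all }A\text{ near }x_0\text{ and small }t\in\lin(F-x_0).
\]
Restricted to $\relint F$, the measure $\mu_F:=\Phi(P,\cdot)|_{\relint F}$ is thus a finite signed Borel measure on $\relint F\subset\aff F$ that is locally translation invariant. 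A standard argument (extending the local invariance to all of $\relint F$ by connectedness, then classifying translation-invariant locally finite signed measures on $\R^j$) gives $\mu_F=c_F\lambda_F$ for some constant $c_F\in\R$. Applying the same locality/covariance comparison between two polytopes whose chosen faces have the same normal cone then shows $c_F=f_j(n(P,F))$ for a well-defined function $f_j$ on $\wp^{d-1}_{d-j-1}$; measurability of $f_j$ descends from that of the kernel $\Phi$ evaluated on a measurable parametrization of spherical polytopes by test polytopes in $\R^d$. An analogous argument on $\inte P$ for full-dimensional $P$, together with a comparison of two such polytopes via a common large enclosing polytope, yields a universal constant $c_d$ with $\Phi(P,\cdot)|_{\inte P}=c_d\lambda|_{\inte P}$.

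Since $P=\bigsqcup_F\relint F$ and $\Phi(P,\cdot)$ is concentrated on $P$, summing the face contributions produces (2.2) and (2.3). Homogeneity of degree $j$ for $\Phi^{(j)}$ follows from $n(\alpha P,\alpha F)=n(P,F)$ and $\lambda_{\alpha F}(\alpha A)=\alpha^j\lambda_F(A)$, while translation covariance and local determinedness of $\Phi^{(j)}$ descend from those of the invariants $n(P,F)$ and $\lambda_F$. Setting $\varphi^{(j)}(P)=\Phi^{(j)}(P,\R^d)$ and taking $A=\R^d$ yields (2.1) and (2.4), and also establishes the local-extension property for each $\Phi^{(j)}$. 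Uniqueness of the decomposition is obtained by recognizing $\Phi(\alpha P,\alpha\cdot)=\sum_j\alpha^j\Phi^{(j)}(P,\cdot)$ as a polynomial in $\alpha>0$ whose coefficients are determined by its values. Positivity is immediate: if $\Phi\ge 0$, each $\Phi^{(j)}(P,\cdot)$ is the restriction of $\Phi(P,\cdot)$ to a Borel subset of $P$ and is therefore nonnegative, which forces $f_j\ge 0$ (by choosing $F$ and $A$ with $\lambda_F(A)>0$) and hence $\varphi^{(j)},\varphi\ge 0$. I expect the main obstacle to lie in the middle step: extracting the translation invariance of $\Phi(P,\cdot)$ on the lower-dimensional set $\relint F$ from the axioms, and arranging the locality comparison cleanly enough that the density constant depends only on $n(P,F)$ rather than on further local data of $P$.
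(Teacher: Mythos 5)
Your proposal is correct and follows essentially the same route as the paper: decompose $\Phi(P,\cdot)$ over $A\setminus P$ and the relative interiors of the faces, show each restriction is a constant multiple of $\lambda_F$ with the constant depending only on the normal cone, obtain measurability of $f_j$ from a parametrization of spherical polytopes by test polytopes, and derive uniqueness from the polynomial expansion of $\Phi(\alpha P,\alpha A)$ in $\alpha$. The only difference is that you supply self-contained tangent-cone arguments for the two steps (vanishing off $P$, and the local representation $\Phi(P,A)=b_j(P,F)\lambda_F(A)$ with $b_j$ determined by $N(P,F)$) that the paper delegates to Schneider's 1978 article.
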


For a local functional $\varphi$ on ${\cal P}$, there is a natural decomposition $\varphi=\varphi^+ -\varphi^-$ with local functionals $\varphi^+, \varphi^-\ge 0$. Namely, the local extension $\Phi$ of $\varphi$ admits a Hahn-Jordan decomposition $\Phi(P,\cdot)=\Phi^+(P,\cdot) -\Phi^-(P,\cdot)$, for each $P\in{\cal P}$, into (nonnegative) measures $\Phi^+(P,\cdot), \Phi^-(P,\cdot)$. It follows from the construction of the Hahn-Jordan decomposition, but also from the explicit representations \eqref{func2} and \eqref{func3}, that $P\mapsto \Phi^+(P,\cdot)$ and $P\mapsto \Phi^-(P,\cdot)$ are measurable and that the kernels $\Phi^+, \Phi^-$ are translation covariant and locally defined. Hence, $ \varphi^+(P)=\Phi^+(P,\R^d)$ and $ \varphi^-(P)=\Phi^-(P,\R^d)$ define local functionals $\varphi^+, \varphi^-\ge 0$ on $\cal P$ with $\varphi=\varphi^+ -\varphi^-$. The decomposition $\Phi(P,\cdot)=\Phi^+(P,\cdot) -\Phi^-(P,\cdot)$ corresponds to the Hahn-Jordan decompositions  $\Phi^{(j)}(P,\cdot)=\Phi^{(j)+}(P,\cdot) -\Phi^{(j)-}(P,\cdot)$ of the $j$-homogeneous parts, and the latter is equivalent to the decomposition of the associated function  $f_j=f_j^+-f_j^-$ into its positive and negative parts, $j=0,...,d-1$. We may, therefore, assume in the following that the local functional $\varphi$ under consideration is nonnegative, if this is helpful.

However, as we already mentioned, the local extension $\Phi$ of a local functional $\varphi$ need not be unique. Even more, for each $j=1,...,d-1$, there are $j$-homogeneous kernels $\Phi^{(j)}\ge 0$, $\tilde\Phi^{(j)}\ge 0$, with $\Phi^{(j)}\not = \tilde\Phi^{(j)}$ which are local extensions of the same local functional  $\varphi^{(j)} = \Phi^{(j)}(\cdot,\R^d) = \tilde\Phi^{(j)}(\cdot ,\R^d)$. This non-uniqueness can also affect the Hahn-Jordan decomposition just explained and therefore the decomposition $\varphi=\varphi^+ -\varphi^-$ of a local functional $\varphi$ as a difference of nonnegative local functionals may depend on the choice of the local extension $\Phi$.

\section{Properties of local functionals} 

In this section, we first give the proof of Theorem \ref{th1}.

\begin{proof}
Let $\varphi$ be a local functional on ${\cal P}$ and let $\Phi$ be a local extension of $\varphi$. In the following, we use  arguments from the first part of the proof of \cite[Theorem 3.1]{KW99}. Namely, for $K\in {\cal P}$ and a Borel set $A\in{\cal B}$, we have
$$
A = (A\setminus P) \cup\bigcup_{j=0}^d\bigcup_{F\in{\cal F}_j(P)} (A\cap \relint F) ,
$$
where the sets in this decomposition are mutually disjoint. Hence
$$
\Phi(P,A) = \Phi(P,A\setminus P) +\sum_{j=0}^d\sum_{F\in{\cal F}_j(P)} \Phi(P,A\cap \relint F).
$$
As in \cite[p. 124]{Schn78}, the fact that $\Phi$ is locally defined, implies $\Phi(P,A\setminus P)=0$. Also, if $A\subset\relint F$, $F\in{\cal F}_j(P)$, then the translation covariance of $\Phi$ implies 
\begin{equation}\label{local}
\Phi (P,A) = b_j(P,F)\lambda_F(A)
\end{equation}
with a constant $b_j(P,F)\in\R$ which, for $j=0,...,d-1$, depends only on the normal cone $N(P,F)$ of $P$ at $F$ (see \cite[p. 123]{Schn78}, for a similar argument). If $j=d$, then $F=P$ and $b_d(P,P)=c_d$ is a constant independent of $P$. For $j\le d-1$, the constants $b_j(P,F)$ give rise to a function $f_j$ on ${\wp}^{d-1}_{d-j-1}$ through $f_j(p) = f_j(n(P,F)) = b_j(P,F)$ (with $p=n(P,F)$), since every spherical polytope $p\in {\wp}^{d-1}_{d-j-1}$ is generated by the normal cone $N(P,F)$ of a face $F\in{\cal F}_j(P)$ of a suitable polytope $P$. To be more precise, let $p\in {\wp}^{d-1}_{d-j-1}$ be given and let $\tilde P$ be the intersection of all closed halfspaces in $\R^d$ with the origin $0$ in the boundary and with outer normal in $p$. Then, $\tilde P$ is a polyhedral set and $L= p^\bot$ is a $j$-dimensional face of $\tilde P$. Intersecting $\tilde P$ with the unit cube $W=[-1/2,1/2]^d$, we obtain a polytope $P$ which has $F=L\cap W$ as a $j$-face and satisfies $p= n(P,F)$.

Equation \eqref{local} also yields the measurability of $f_j$. Namely, let ${\cal Q}$ be the set of all polytopes $P\subset \R^d$, for which $0$ is in the relative interior of some $j$-face $F$ of $P$ and such that the unit cube $W = [-1/2,1/2]^d$ intersects $F$ only in relative interior points and does not contain points of any other $j$-face $G$ of $P$. Let ${\cal M}$ consist of all intersections $P\cap W, P\in{\cal Q}$. It is easy to see that ${\cal M}$ is a measurable subset of ${\cal P}$ (which is neither open nor closed). Also, it follows that the mapping $P\mapsto n(P,F)$ is a bi-continuous bijection from ${\cal M}$ to ${\wp}_{d-j-1}^{d-1}$. Let $A = rB$ with $r<1$, then 
$$\Phi (P,A) = b_j(P,F)\lambda_F(A) = f_j(n(P,F)) c(j,r),$$ 
for $P\in{\cal M}$, where the constant $c(j,r)$ is the $j$-volume of a $j$-dimensional ball of radius $r$. Since $P\mapsto \Phi(P,A)$ is measurable, $f_j$ is measurable as the composition of a continuous and a measurable mapping.

We now define a signed measure $\Phi^{(j)}(P,\cdot)$ by
\begin{equation}\label{def}
\Phi^{(j)} (P,\cdot) = \sum_{F\in {\cal F}_j(P)} f_j(n(P,F)) \lambda_F
\end{equation}
(which corresponds to \eqref{func3}) and obtain the decomposition
$$\Phi (P,A) = \sum_{j=0}^{d-1} \Phi^{(j)}(P,A) + c_d\lambda (P\cap A)  $$
which corresponds to \eqref{func2}.
Since $n(\alpha P,\alpha F) = n(P,F)$, for $\alpha > 0$, we have
$$
\Phi^{(j)} (\alpha P,\alpha A) = \sum_{F\in {\cal F}_j(P)} f_j(n(P,F)) \lambda_{\alpha F}(\alpha A) = \alpha^j\Phi^{(j)} (P,A)
$$
and get
\begin{equation}\label{expansion}
\Phi (\alpha P,\alpha A) = \sum_{j=0}^{d-1} \alpha^j \Phi^{(j)}(P,A) + \alpha^d c_d\lambda (P\cap A).
\end{equation}
The polynomial expansion \eqref{expansion} shows that the decomposition of $\Phi$ into $j$-homogeneous parts is unique and that the $j$-homogeneous parts $\Phi^{(j)}$ of $\Phi$ necessarily have the form \eqref{def}.
Also, by construction, the functions $f_0,...,f_{d-1}$ are uniquely determined by $\Phi$. 
Putting $\varphi^{(j)}(P) =  \Phi^{(j)} (P,\R^d)$, the formulas \eqref{func1} and \eqref{func4} result. 

If $\Phi\ge 0$, then \eqref{local} shows that $f_j\ge 0, j=0,...,d$, and $\Phi^{(j)}\ge 0, \varphi^{(j)}\ge 0$, and $\varphi\ge 0$ follow.
\end{proof}

Concerning the non-uniqueness of local extensions on $\cal P$, we give the following example.

\begin{example} For $j\in\{ 1,...,d-1\}$, we consider a $j$-homogeneous kernel $\Phi^{(j)}\ge 0$ on ${\cal P}$ given by
$$
\Phi^{(j)} ( P,\cdot) = \sum_{F\in {\cal F}_j(P)} f_j(n(P,F)) \lambda_{F}
$$
for some measurable function $f_j\ge 0$ on ${\wp}^{d-1}_{d-j-1}$. For $x_0\in\R^d\setminus \{ 0\}$, we define $\tilde \Phi^{(j)}$ by
\begin{equation}\label{ex}
\tilde \Phi^{(j)}(P,\cdot ) = \sum_{F\in {\cal F}_j(P)} \left( f_j(n(P,F)) +\int_{n(P,F)} \langle u,x_0\rangle \omega_{d-j-1}( du)\right) \lambda_{F},
\end{equation}
for $P\in{\cal P}$. Here, $\omega_{d-j-1}$ denotes the spherical Lebesgue measure in the subspace generated by $n(P,F)$, namely ${F^\bot}$. Using the well-known representation of the $j$th support measure $\Lambda_j(P,\cdot)$ of $P$ and its connection to the $j$th surface area measure (see \cite[(14.11)]{SW}, where different normalizations are used), we can re-write \eqref{ex} as
$$
\tilde \Phi^{(j)}(P,A) =  \Phi^{(j)}(P,A)+\int_{\Nor P} {\bf 1}_A(x)\langle u,x_0\rangle \Lambda_j(P, d(x,u)),
$$
for $A\in{\cal B}$, were the integration is over the (generalized) normal bundle $\Nor P$ of $P$.

Obviously, $\Phi^{(j)} \not= \tilde\Phi^{(j)}$, but
\begin{align*}
\tilde\Phi^{(j)} (P,\R^d) &= \Phi^{(j)} (P,\R^d)+\int_{\Nor P} \langle u,x_0\rangle \Lambda_j(P, d(x,u))\cr
&= \Phi^{(j)} (P,\R^d)+ \int_{S^{d-1}} \langle u,x_0\rangle  \Psi_j(P, du)\cr
&= \Phi^{(j)} (P,\R^d) ,
\end{align*}
since the $j$th surface area measure $\Psi_j(P,\cdot )$ of $P$ has centroid $0$. Hence, $\Phi^{(j)}$ and $\tilde\Phi^{(j)}$ are local extensions of the same local functional $\varphi^{(j)}$. Moreover, if
$$f_j(p) +\int_{p} \langle u,x_0\rangle \omega_{d-j-1}( du) \ge 0, \quad p\in {\wp}^{d-1}_{d-j-1},
$$
then both local extensions $\Phi_j$ and $\tilde\Phi_j$ are nonnegative. For example, this is the case if $f_{j}(p) = \omega_{d-j-1}(p)$, and  $\| x_0\|\le 1$. Then $\Phi^{(j)}(P,\cdot)$ is the $j$th curvature measure of $P$ and thus $V_j$ has a second local extension $\tilde\Phi^{(j)}\ge 0$. Even more, in this case  both local extensions $P\mapsto\Phi_j(P,\cdot)$ and $P\mapsto\tilde\Phi^{(j)}(P,\cdot)$ of $V_j$ are additive.
\end{example}

\section{Additive functionals}

In this section, we consider functionals $\varphi$ on ${\cal P}$ which are additive hence {\it valuations} and weakly continuous. Here, weak continuity means that $\varphi (P)$ behaves continuously with respect to individual shifts of the hyperplanes generating the facets of $P$ (see \cite{McM83, McM93}, for the precise definition). Any functional $\varphi$ on ${\cal P}$ which is continuous with respect to the Hausdorff  metric is weakly continuous (see \cite{McM83, McM93} again).

For simplicity, and following
 \cite[Section 11.1]{SW}, we call a translation invariant, additive and weakly continuous functional on ${\cal P}$ a {\it standard functional}.

\begin{theorem}\label{th1'} Each standard functional $\varphi$ on $\cal P$ is local and has a local extension $\Phi (P,\cdot)$ which is additive as a measure-valued function of $P\in{\cal P}$. In that case, the $j$-homogeneous part $\varphi^{(j)}$ of $\varphi$ is a standard functional with the $j$-homogeneous part  $\Phi^{(j)}(P,\cdot)$ of $\Phi (P,\cdot)$ as local extension. Moreover, the measures  $\Phi^{(j)}(P,\cdot)$ are additive as measure-valued functions of $P\in{\cal P}$ and the associated functions $f_j$ are simply additive, $j=0,...,d$. 
\end{theorem}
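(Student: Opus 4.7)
The plan is to build the local extension $\Phi$ explicitly from McMullen's structural results for translation invariant, weakly continuous valuations on $\mathcal{P}$ (\cite{McM83, McM93}). By McMullen's homogeneous decomposition, any standard functional $\varphi$ admits a polynomial expansion $\varphi(\alpha P) = \sum_{j=0}^{d} \alpha^{j} \varphi^{(j)}(P)$ for $\alpha \geq 0$, with each $\varphi^{(j)}$ itself a $j$-homogeneous standard functional: translation invariance and additivity pass to the coefficients of the polynomial, and weak continuity does as well since the coefficients depend weakly continuously on $P$ whenever the polynomial does. The face-sum representation for translation invariant valuations on $\mathcal{P}$ then furnishes, for $j = 0,\ldots,d-1$, a simply additive function $f_j$ on $\wp^{d-1}_{d-j-1}$ and a constant $c_d=f_d$ with
\[
\varphi^{(j)}(P) = \sum_{F \in \mathcal{F}_j(P)} f_j(n(P,F))\, V_j(F), \qquad \varphi^{(d)}(P) = c_d V_d(P).
\]

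I would then set $\Phi^{(j)}(P,\cdot) := \sum_{F \in \mathcal{F}_j(P)} f_j(n(P,F))\, \lambda_F$ for $j \leq d-1$ and $\Phi^{(d)}(P,\cdot) := c_d \lambda(P \cap \cdot)$, and put $\Phi := \sum_{j=0}^{d} \Phi^{(j)}$. Measurability of $f_j$ follows as in the proof of Theorem \ref{th1}, so $P \mapsto \Phi(P,A)$ is measurable because faces and normal cones depend continuously on parallel shifts of the facet hyperplanes. Translation covariance and local determination are immediate from the formula, since translation preserves normal cones and $\lambda_F$ is the restriction of Lebesgue measure to $F$; and $\Phi(P,\mathbb{R}^d)=\varphi(P)$ is built in. Hence $\Phi$ is a local extension of $\varphi$, and evaluating at $\mathbb{R}^d$ the individual summands identifies $\Phi^{(j)}$ as a local extension of $\varphi^{(j)}$.

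The principal remaining content is additivity of $P \mapsto \Phi^{(j)}(P,\cdot)$ as a measure-valued function, and simple additivity of $f_j$. For $P, Q \in \mathcal{P}$ with $P \cup Q \in \mathcal{P}$ (hence $P \cap Q \in \mathcal{P}$) and a Borel set $A$, I would prove $\Phi^{(j)}(P \cup Q, A) + \Phi^{(j)}(P \cap Q, A) = \Phi^{(j)}(P, A) + \Phi^{(j)}(Q, A)$ by localizing at each $x \in A$ lying in the relative interior of some $j$-face and matching the coefficients $f_j(n(\cdot,F))$ across the four polytopes. At a $j$-face shared (or split) between $P$ and $Q$, the pertinent normal cones in $P \cap Q$ and $P \cup Q$ decompose as unions, respectively intersections, of the normal cones in $P$ and $Q$ with interiors disjoint; simple additivity of $f_j$ then balances the four contributions, while the $d$-homogeneous case reduces to additivity of Lebesgue measure. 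Conversely, simple additivity of $f_j$ is obtained by choosing $P, Q$ so that the union and intersection realize a prescribed dissection $p = p_1 \cup p_2$ of a given spherical polytope and reading off the dissection identity from the additivity of $\varphi^{(j)}$. The main obstacle I anticipate is precisely this combinatorial bookkeeping at faces merged or split by the union/intersection operation, where one must track both the face structure and the way simple additivity of $f_j$ matches the local contributions.
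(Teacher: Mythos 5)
Your proposal follows essentially the same route as the paper: invoke McMullen's homogeneous decomposition and face-sum representation with simply additive associated functions $f_j$, then define $\Phi^{(j)}(P,\cdot)=\sum_{F\in{\cal F}_j(P)}f_j(n(P,F))\lambda_F$ and verify the local-extension properties directly from this formula. Your third paragraph supplies a sketch of the additivity of $P\mapsto\Phi^{(j)}(P,\cdot)$ that the paper simply asserts (and your re-derivation of simple additivity of $f_j$ is redundant, since McMullen's theorem already provides it), but the argument is correct and the approach is the same.
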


\begin{proof} It follows from results of McMullen \cite{McM83, McM93} that a standard functional $\varphi$ on ${\cal P}$ allows a unique decomposition 
$$
\varphi = \sum_{j=0}^d \varphi^{(j)}
$$
into $j$-homogeneous standard functionals $\varphi^{(j)}$ which are of the form
$$
\varphi^{(j)} (P) = \sum_{F\in {\cal F}_j(P)} f_{j}(n(P,F))V_j(F)
$$
with a simply additive function $f_j$ on ${\wp}^{d-1}_{d-j-1}$. 

For $j=0,...,d$, we define a signed measure $\Phi^{(j)}(P,\cdot)$ by
$$
\Phi^{(j)} (P,\cdot) = \sum_{F\in {\cal F}_j(P)} f_j(n(P,F)) \lambda_F
$$
and put
$$\Phi (P,\cdot) = \sum_{j=0}^{d} \Phi^{(j)}(P,\cdot). $$
Then $\Phi^{(j)}$ is a local extension of $\varphi^{(j)}$ and $\Phi$ is a local extension of $\varphi$ and  $\Phi^{(j)} (P,\cdot)$ and $\Phi (P,\cdot)$ are additive in $P$. 
\end{proof}

\section{Translative integral formulas}

Let $\varphi$ be a local functional on ${\cal P}$ with local extension $\Phi$. Let $\varphi^{(j)}$ and $\Phi^{(j)}$ be the $j$-homogeneous parts of $\varphi$ and $\Phi$ with associated function $f_j$, $j=0,...,d$. In this section, we prove a translation formula for $\Phi^{(j)}$ and its iteration. For this purpose, we remark the following. Let $P,Q$ be polytopes, $F$ a $k$-face of $P$, $G$ a $(d+j-k)$-face of $Q$. If $F$ and $G$ are in general relative position (see \cite[p. 214]{SW}, for this notion) and if $x\in\R^d$ is such that $F$ and $G+x$ intersect in relative interior points, then $F\cap (G+x)$ is a $j$-face of $P\cap (Q+x)$. The normal cone of $F\cap (G+x)$ does not depend on the choice of $x$ and we denote its intersection with $S^{d-1}$ by $n(P,Q;F,G)$. In the same way, for polytopes $P_1,...,P_k$ with faces $F_1,...,F_k$ in general relative position, we define the mixed spherical polytope $n(P_1,...,P_k;F_1,...,F_k)$ recursively (see \cite[Section 6.4]{SW}, for details).

In the following, we often abbreviate the translation $A+x$ of a set $A\subset \R^d$ by $A^x$. We also use the determinant $[F_1,...,F_k]$ of faces $F_1,...,F_k$, as it is defined in \cite[p. 183]{SW}.

\begin{theorem}\label{trans1}
Let $\varphi$ be a local functional on ${\cal P}$ with local extension $\Phi$. Let $\varphi^{(j)}$ and $\Phi^{(j)}$ be the $j$-homogeneous parts of $\varphi$ and $\Phi$ with associated function $f_j$, $j=0,...,d$.
Then, for polytopes $P,Q \in {\cal P}$ and Borel sets  $A, B \in {\cal B}$, we have
\[ \int _{{\mathbb R}^d} \Phi^{(j)}(P \cap Q^x, A \cap B^x) \, \lambda(d x) =\sum_{m=j}^d \Phi_{m,d-m+j}^{(j)} (P,Q;A \times B) \]
with finite signed measures $\Phi_{m,d-m+j}^{(j)}(P,Q;\cdot)$ on ${\mathbb R}^d \times {\mathbb R}^d$, which are defined by
\[ \Phi_{m,d-m+j}^{(j)} (P,Q;\cdot) = \sum_{F \in {\cal F}_{m}(P)}  
\sum_{G \in {\cal F}_{d-m+j}(Q)} f_j(n(P,Q;F,G)) [F,G] \lambda_{F}
\otimes \lambda_{G} \]
$(m = j,\dots,d)$. In particular,
\begin{align*}
&  \Phi_{j,d}^{(j)} (P,Q;A \times B) = \Phi^{(j)}(P,A) \lambda (Q\cap B), \\
&  \Phi_{d,j}^{(j)}(P,Q;A \times B) = \lambda (P\cap A) \Phi^{(j)}(Q,B).
\end{align*}

More generally, for $k\ge 2$, polytopes $P_1,...,P_k \in {\cal P}$ and Borel sets  $A_1,...,A_k \in {\cal B}$, the following iterated translative integral formula holds,
\begin{align}\label{mixedint}
&  \int_{({\mathbb R}^d)^{k-1}}
\Phi^{(j)}(P_1\cap P_2^{x_2}\cap \dots \cap P_k^{x_k},A_1\cap A_2^{x_2}\cap \dots \cap A_k^{x_k})\, \lambda^{k-1} (d(x_2,\dots ,x_k)) \nonumber\\
& = \sum_{{m_1,\dots,m_k=j}\atop{m_1+\dots +m_k=(k-1)d+j}}^d
\Phi^{(j)}_{m_1,\dots,m_k}(P_1,\dots,P_k;A_1\times \dots \times A_k)
\end{align}
with mixed (signed) measures $\Phi^{(j)}_{m_1,\dots,m_k}(P_1,\dots,P_k;\cdot)$ on ${({\mathbb R}^d)^{k}}$ given by
\begin{align}\label{mixedpol}
\Phi_{m_1,\dots,m_k}^{(j)}(P_1,\dots,P_k;\cdot ) &=\sum_{F_1\in {\cal F}_{m_1}(P_1)}\dots \sum_{F_k\in
{\cal F}_{m_k}(P_k)} f_j(n(P_1,\dots,P_k;F_1,\dots,F_k))\nonumber\\
&  \quad \times \,[F_1,\dots,F_k] \lambda_{F_1}\otimes\cdots\otimes\lambda_{F_k}.
\end{align}

The measure $\Phi^{(j)}_{m_1,\dots,m_k}(P_1,\dots,P_k;\cdot)$ is homogeneous of degree $m_i$  in $P_i$, $i=1,...,k$.
\end{theorem}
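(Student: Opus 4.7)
The plan is to insert the representation \eqref{func3} into the integrand and reduce everything to the standard translative integral formula for Lebesgue measures on affine flats. Starting with $k = 2$, expand
\[ \Phi^{(j)}(P \cap Q^x, A \cap B^x) = \sum_{H \in {\cal F}_j(P \cap Q^x)} f_j(n(P \cap Q^x, H))\lambda_H(A \cap B^x). \]
The key observation is that for Lebesgue-almost every $x \in \R^d$, every $j$-face $H$ of $P \cap Q^x$ arises uniquely as an intersection $H = F \cap G^x$ with $F \in {\cal F}_m(P)$, $G \in {\cal F}_{d-m+j}(Q)$ in general relative position, where $m$ ranges over $\{ j, \ldots, d\}$. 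The normal cone at such an intersection face is then, by definition, the mixed spherical polytope $n(P,Q;F,G)$, and is independent of the shift $x$.

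After exchanging summation and integration (the exceptional set of $x$'s for which some pair $(F, G)$ fails to be in general relative position is contained in finitely many affine flats of positive codimension, and hence is Lebesgue null), the assertion reduces to the classical identity
\[ \int_{\R^d} \lambda_{F \cap G^x}(A \cap B^x)\, \lambda(dx) = [F, G]\, \lambda_F(A)\, \lambda_G(B), \]
which is a standard consequence of Fubini together with the decomposition $\R^d = \lin F + \lin G$, and may be found, for example, in \cite[Section 6.4]{SW}. This yields the $k = 2$ statement with the stated explicit form of $\Phi^{(j)}_{m, d-m+j}$. The two special cases then fall out by inspection: for $m = j$ one has ${\cal F}_d(Q) = \{ Q\}$, $[F, Q] = 1$ and $n(P,Q;F,Q) = n(P,F)$, so that the summand equals $\Phi^{(j)}(P, A)\lambda(Q \cap B)$; the case $m = d$ is symmetric.

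For the iterated formula I would proceed by induction on $k$, or equivalently repeat the same argument in a single step. For $\lambda^{k-1}$-almost every $(x_2, \ldots, x_k) \in (\R^d)^{k-1}$ the $j$-faces of $P_1 \cap P_2^{x_2} \cap \cdots \cap P_k^{x_k}$ are exactly the intersections $F_1 \cap F_2^{x_2} \cap \cdots \cap F_k^{x_k}$ with $F_i \in {\cal F}_{m_i}(P_i)$ in general relative position and $m_1 + \cdots + m_k = (k-1)d + j$, and their normal cones are the mixed polytopes $n(P_1, \ldots, P_k; F_1, \ldots, F_k)$, again independent of the shifts. Interchanging sum and integral and applying the iterated translative formula for face-restricted Lebesgue measures produces \eqref{mixedint} with the measures defined as in \eqref{mixedpol}. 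The homogeneity claim is then immediate: scaling $P_i$ by $\alpha \ge 0$ scales $\lambda_{\alpha F_i}(\alpha\, \cdot)$ by $\alpha^{m_i}$, while the normal cones $n(P_1, \ldots, P_k; F_1, \ldots, F_k)$ and the determinants $[F_1, \ldots, F_k]$ are invariant, since both depend only on the affine hulls of the faces involved.

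The main (and really only) obstacle is the face-decomposition of the $k$-fold translated intersection and the identification of its normal cones with the mixed spherical polytopes. These are polytopal standards, but one must check that the exceptional set of translation vectors, where some tuple of faces fails to be in general relative position, indeed has $\lambda^{k-1}$-measure zero and can be discarded. Rather than reproving this, I would invoke the machinery of \cite[Section 6.4]{SW}. A point worth emphasizing in the write-up is that the argument never uses any additivity or continuity property of $\varphi$: all the essential structure is already encoded in the face-wise representation \eqref{func3}, which is precisely why the proof carries over intact from the classical intrinsic-volume setting to arbitrary local functionals.
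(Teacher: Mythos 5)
Your proposal is correct and follows essentially the same route as the paper: expand $\Phi^{(j)}(P\cap Q^x,\cdot)$ via the face representation \eqref{func3}, identify the $j$-faces of $P\cap Q^x$ as intersections $F\cap G^x$ for $\lambda$-almost all $x$ with $x$-independent mixed normal cones, and reduce to the translative identity for $\lambda_{F\cap G^x}$, iterating for $k\ge 2$. The one point the paper makes explicit that you pass over is the measurability of $x\mapsto\Phi^{(j)}(P\cap Q^x,A\cap B^x)$ --- since $f_j$ is merely measurable and no weak continuity of $P\mapsto\Phi^{(j)}(P,\cdot)$ is assumed, this cannot be quoted from \cite[Lemma 5.2.1]{SW} but must be read off from the decomposition into terms $f_j(n(P,Q;F,G))\lambda_{F\cap G^x}(A\cap B^x)$, which your argument in fact supplies.
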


\begin{proof} The proof of the results follows closely the arguments used in the proofs of Theorems 5.2.2 and 6.4.1 in \cite{SW}. We therefore present only the essential steps. In view of the decomposition $f_j =  f_j^+-f_j^-,$ with $f_j^+,f_j^-\ge 0$, we may assume $f_j\ge 0$.

For polytopes $P,Q \in {\cal P}$, Borel sets  $A, B \in {\cal B}$ and $x\in\R^d$, Theorem \ref{th1} implies
$$
\Phi^{(j)}(P \cap Q^x, A \cap B^x) = \sum_{F'\in {\cal F}_j(P\cap Q^x)} f_j(n(P\cap Q^x,F'))\lambda_{F'}(A\cap B^x) .
$$
Due to the arguments given in \cite[p. 184]{SW}, for $\lambda$-almost all $x$, the face $F'$ is the intersection $F'=F\cap G^x$ of some $m$-face $F$ of $P$ with a $(d-m+j)$-face $G$ of $Q$, $m\in\{ j,...,d\}$ (and such that $F$ and $G^x$ meet in relative interior points). Therefore, for $\lambda$-almost all $x$,
\begin{align*}
&\Phi^{(j)}(P \cap Q^x, A \cap B^x)\cr 
&\quad= \sum_{m=j}^d\sum_{F\in {\cal F}_m(P)} \sum_{G\in {\cal F}_{d-m+j}(Q)} f_j(n(P\cap Q^x,F\cap G^x))\lambda_{F\cap G^x}(A\cap B^x)\cr
&\quad= \sum_{m=j}^d\sum_{F\in {\cal F}_m(P)} \sum_{G\in {\cal F}_{d-m+j}(Q)} f_j(n(P,Q;F,G))\lambda_{F\cap G^x}(A\cap B^x) .
\end{align*}

Since $x\mapsto n(P \cap Q^x, F \cap G^x)$ and $x\mapsto\lambda_{F\cap G^x}(A\cap B^x)$ are measurable for $\lambda$-almost all $x$ (compare the corresponding results in \cite[Section 5.2]{SW}) and since $f_j$ is measurable, we obtain the measurability of
$$x\mapsto \Phi^{(j)}(P \cap Q^x, A \cap B^x)$$
for $\lambda$-almost all $x$. Notice that we cannot use Lemma 5.2.1 in \cite{SW} directly here, since we do not assume that $P\mapsto \Phi^{(j)}(P,\cdot )$ is weakly continuous. Hence, the following integral is well defined and we obtain
\begin{align*}
\int_{\R^d}&\Phi^{(j)}(P \cap Q^x, A \cap B^x)\lambda (dx)\cr 
&= \sum_{m=j}^d\sum_{F\in {\cal F}_m(P)} \sum_{G\in {\cal F}_{d-m+j}(Q)} f_j(n(P,Q;F,G))\int_{\R^d}\lambda_{F\cap G^x}(A\cap B^x) \lambda (dx).
\end{align*}
In \cite[pp. 185-186]{SW}, it was shown that 
$$\int_{\R^d}\lambda_{F\cap G^x}(A\cap B^x) \lambda (dx)= [F,G]\lambda_F(A)\lambda_G(B)
$$
and so the first part of the theorem follows. In particular, for $m=j$ we have
\begin{align*}&\sum_{F\in {\cal F}_j(P)} \sum_{G\in {\cal F}_{d}(Q)} f_j(n(P,Q;F,G))[F,G]\lambda_F(A)\lambda_G(B)\cr
&\quad
= \sum_{F\in {\cal F}_j(P)} f_j(n(P,F))\lambda_F(A)\lambda (Q\cap B)\cr
&\quad = \Phi^{(j)}(P,A)\lambda (Q\cap B)
\end{align*}
and for $m=d$ we get similarly
\begin{align*}&\sum_{F\in {\cal F}_d(P)} \sum_{G\in {\cal F}_{j}(Q)} f_j(n(P,Q;F,G))[F,G]\lambda_F(A)\lambda_G(B)\cr
&\quad = \sum_{G\in {\cal F}_j(Q)} f_j(n(Q,G))\lambda (P\cap A)\lambda_G(B)\cr
&\quad = \lambda (P\cap A)\Phi^{(j)}(Q,B).
\end{align*}

For the second part, the measurability of the integrand follows from the case $m=2$ by iteration, and in the same way we get also the iterated translation formula using a recursion formula for the determinant of faces (see \cite[p. 231]{SW}, for details). 

The homogeneity property of the mixed measures follows immediately from the explicit representation \eqref{mixedpol}.
\end{proof}

We remark that \eqref{mixedint}, for all Borel sets $A_1,...,A_k$, is equivalent to
\begin{align}\label{mixedint2}
&  \int_{({\mathbb R}^d)^{k-1}}\int_{\R^d} f(x_1,x_1-x_2,...,x_1-x_k) 
\Phi^{(j)}(P_1\cap P_2^{x_2}\cap \dots \cap P_k^{x_k},dx_1)\cr 
&\quad\times\lambda^{k-1} (d(x_2,\dots ,x_k)) \\
& = \sum_{{m_1,\dots,m_k=j}\atop{m_1+\dots +m_k=(k-1)d+j}}^d \int_{({\mathbb R}^d)^{k}} f(x_1,...,x_k) 
\Phi^{(j)}_{m_1,\dots,m_k}(P_1,\dots,P_k;d(x_1,...,x_k)),\nonumber
\end{align}
for all continuous functions $f$ on $({\mathbb R}^d)^{k}$ (compare \cite[formula (6.16)]{SW}).

\begin{koro}\label{trans2}
If $P, Q \in {\cal P}$ are
polytopes and $j \in \{0,\dots,d\}$, then
\begin{align*}
 \int_{{\mathbb R}^d}& \varphi^{(j)}(P \cap Q^x)\, \lambda(dx)\cr &\quad = \varphi^{(j)}(P)V_d(Q) + \sum_{m=j+1}^{d-1} \varphi_{m,d-m+j}^{(j)}  
(P,Q) + \varphi^{(j)}(Q)V_d (P),
\end{align*}
where
\begin{align*} &\varphi_{m,d-m+j}^{(j)}(P,Q)\cr
&\quad = \sum_{F \in {\cal F}_{m}(P)} \sum_{G \in {\cal F}_{d-m+j}(Q)} 
f_j(n(P,Q;F,G)) [F, G] V_m(F) V_{d-m+j}(G). 
\end{align*}

More generally, for $k\ge 2$ and polytopes $P_1,...,P_k \in {\cal P}$, we have 
\begin{align*}
  \int_{({\mathbb R}^d)^{k-1}}&
\varphi^{(j)}(P_1\cap P_2^{x_2}\cap \dots \cap P_k^{x_k})\, \lambda^{k-1} (d(x_2,\dots ,x_k)) \\
& \quad = \sum_{{m_1,\dots,m_k=j}\atop{m_1+\dots +m_k=(k-1)d+j}}^d
\varphi^{(j)}_{m_1,\dots,m_k}(P_1,\dots,P_k)
\end{align*}
with 
\begin{align*}
\varphi_{m_1,\dots,m_k}^{(j)}(P_1,\dots,P_k) &=\sum_{F_1\in {\cal F}_{m_1}(P_1)}\dots \sum_{F_k\in
{\cal F}_{m_k}(P_k)} f_j(n(P_1,\dots,P_k;F_1,\dots,F_k))\\
&  \quad \times \,[F_1,\dots,F_k] V_{m_1}(F_1)\cdots V_{m_k}(F_k).
\end{align*}

The mixed functional $\varphi^{(j)}_{m_1,\dots,m_k}(P_1,\dots,P_k)$ is homogeneous of degree $m_i$  in $P_i$, $i=1,...,k$.

Moreover,
$$P_i\mapsto \varphi^{(j)}_{m_1,\dots,m_i,\dots,m_k}(P_1,\dots,P_i,\dots, P_k)$$
is a local functional with local extension
$$(P_i,A_i)\mapsto \Phi^{(j)}_{m_1,\dots,m_i,\dots,m_k}(P_1,\dots,P_i,\dots, P_k,\R^d\times\cdots\times\R^d\times A_i\times\R^d\cdots\times\R^d) .$$
\end{koro}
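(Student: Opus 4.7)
The plan is to deduce this corollary directly from Theorem \ref{trans1} by specialising all Borel-set arguments to $\R^d$, and to verify the final local-extension claim by inspecting the explicit formula \eqref{mixedpol}. In the two-polytope formula of Theorem \ref{trans1}, first take $A=B=\R^d$: the left-hand side reduces to $\int\varphi^{(j)}(P\cap Q^x)\,\lambda(dx)$ because $\Phi^{(j)}(\cdot,\R^d)=\varphi^{(j)}(\cdot)$, while on the right each factor $\lambda_F\otimes\lambda_G$ evaluated at $\R^d\times\R^d$ collapses to $V_m(F)V_{d-m+j}(G)$, producing $\varphi^{(j)}_{m,d-m+j}(P,Q)$. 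The extreme terms $m=j$ and $m=d$ are then isolated via the two identities already recorded at the end of the first assertion of Theorem \ref{trans1}; they yield respectively $\varphi^{(j)}(P)V_d(Q)$ and $V_d(P)\varphi^{(j)}(Q)$. The same specialisation applied to \eqref{mixedint} with all $A_i=\R^d$ gives the iterated formula, and the $m_i$-homogeneity of the mixed functionals is inherited from the corresponding homogeneity statement for the mixed measures.

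For the local-extension claim, fix polytopes $P_l$ for $l\neq i$ and consider
$$
\Psi(P_i,A_i):=\Phi^{(j)}_{m_1,\ldots,m_k}\bigl(P_1,\ldots,P_k;\R^d\times\cdots\times A_i\times\cdots\times\R^d\bigr).
$$
From \eqref{mixedpol}, $\Psi(P_i,\R^d)=\varphi^{(j)}_{m_1,\ldots,m_k}(P_1,\ldots,P_k)$, while measurability of $\Psi(\cdot,A_i)$ and the finite signed-measure property of $\Psi(P_i,\cdot)$ follow from the product structure. Translation covariance in the $i$-th slot relies on the observation that the mixed normal cone $n(P_1,\ldots,P_k;F_1,\ldots,F_k)$ and the determinant $[F_1,\ldots,F_k]$ depend only on direction spaces and on the individual normal cones (hence are translation invariant), together with $\lambda_{F_i+x}(A_i+x)=\lambda_{F_i}(A_i)$.

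The main obstacle is verifying local determination. Suppose $P_i\cap U=Q_i\cap U$ for some open set $U\supset A_i$. Only faces $F_i\in{\cal F}_{m_i}(P_i)$ meeting $A_i$ contribute to $\Psi(P_i,A_i)$, and each such $F_i$ necessarily meets $U$. Because the tangent cone of a polytope at any of its points is determined by an arbitrarily small neighbourhood of that point, each such $F_i$ is matched on $U$ by a unique $F_i'\in{\cal F}_{m_i}(Q_i)$ with $F_i\cap U=F_i'\cap U$; in particular $F_i$ and $F_i'$ share the same direction space and the same normal cone $N(P_i,F_i)=N(Q_i,F_i')$. Since the mixed normal cone and the determinant depend on $F_i$ only through these data, they are unchanged upon replacing $F_i$ by $F_i'$, and $\lambda_{F_i}(A_i)=\lambda_{F_i'}(A_i)$ because $A_i\subset U$. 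This face-by-face matching yields $\Psi(P_i,A_i)=\Psi(Q_i,A_i)$, completing the verification that $\Psi$ is a local extension of $P_i\mapsto\varphi^{(j)}_{m_1,\ldots,m_k}(P_1,\ldots,P_k)$.
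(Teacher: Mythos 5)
Your proposal is correct and follows the same route as the paper: the integral formulas are obtained by specialising Theorem \ref{trans1} to $A=B=\R^d$ (respectively all $A_i=\R^d$), and the final assertion is read off from the explicit representation \eqref{mixedpol}. The paper simply states that translation covariance and local determination are ``immediate consequences'' of \eqref{mixedpol}; you supply the face-matching details that justify this, which is consistent with (and a useful elaboration of) the paper's argument.
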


\begin{proof}
It remains to prove the last assertion, namely the translation covariance and local determination of $$(P_i,A_i)\mapsto \Phi^{(j)}_{m_1,\dots,m_i,\dots,m_k}(P_1,\dots,P_i,\dots, P_k,\R^d\times\cdots\R^d\times A_i\times\R^d\cdots\times\R^d) .$$
Both properties are immediate consequences of \eqref{mixedpol}.
\end{proof}

We comment  shortly on some further properties of the mixed functionals \linebreak $\varphi^{(j)}_{m_1,\dots,m_k}$ and their local extensions   $\Phi^{(j)}_{m_1,\dots,m_k}$. For details, we refer to \cite[Section 6.4]{SW}, where corresponding results are discussed for the mixed measures and functionals of intrinsic volumes and curvature measures. Generally, the results follow from \eqref{mixedpol}. A first property is the {\it symmetry}. If the indices $m_1,\dots,m_k$, the polytopes $P_1,...,P_k$ and the Borel sets $A_1,...,A_k$ are interchanged by the same permutation, the values of the mixed measure $\Phi^{(j)}_{m_1,\dots,m_k}(P_1,...,P_k;A_1\times\cdots\times A_k)$ and of the mixed functional $\varphi^{(j)}_{m_1,\dots,m_k}(P_1,...,P_k)$ remain unchanged. The second property is the {\it decomposability}. If $m_k=d$ (and this case is sufficient due to the symmetry), then
\begin{align}\label{decomp}
&\Phi^{(j)}_{m_1,\dots,m_{k-1},d}(P_1,...,P_{k-1},P_k;A_1\times\cdots\times A_{k-1}\times A_k)\cr
&\quad = \Phi^{(j)}_{m_1,\dots,m_{k-1}}(P_1,...,P_{k-1};A_1\times\cdots\times A_{k-1})\lambda (P_k\cap A_k)
\end{align}
and
\begin{align*}
\varphi^{(j)}_{m_1,\dots,m_{k-1},d}(P_1,...,P_{k-1},P_k) = \varphi^{(j)}_{m_1,\dots,m_{k-1}}(P_1,...,P_{k-1})V_d(P_k).
\end{align*}
Besides the trivial case $j=d$ of Corollary \ref{trans2}, we also notice the simple case $j=d-1$, where we obtain
\begin{align*}
 \int_{{\mathbb R}^d}& \varphi^{(d-1)}(P \cap Q^x)\, \lambda(dx) = \varphi^{(d-1)}(P)V_d(Q) + \varphi^{(d-1)}(Q)V_d (P).
\end{align*}

As a first example, we mention the case $f_j=1, B=\R^d$. Then, Theorem \ref{trans1} implies a result for the $j$th Hausdorff measure ${\cal H}^j_{skel}$ (concentrated on the $j$-skeleton of the polytopes),
\[ \int _{{\mathbb R}^d} {\cal H}^j_{skel}(P \cap Q^x, A) \, \lambda(d x) =\sum_{m=j}^d \sum_{F \in {\cal F}_{m}(P)}  
\sum_{G \in {\cal F}_{d-m+j}(Q)} [F,G] \lambda_{F}(A) V_{d-m+j}(G). \]
In particular, for $j=0$ and $A=\R^d$, we obtain the obvious formula for the number $v_0$ of vertices,
\[ \int _{{\mathbb R}^d} v_0(P \cap Q^x) \, \lambda(d x) =\sum_{m=0}^d \sum_{F \in {\cal F}_{m}(P)}  
\sum_{G \in {\cal F}_{d-m}(Q)} [F,G] V_m(F) V_{d-m}(G). \]
We can also obtain a kinematic version of these results. Namely, if we integrate $[F,\vartheta G]$, for $F \in {\cal F}_{m}(P), 
G \in {\cal F}_{d-m+j}(Q)$, over all rotations $\vartheta$ with respect to the normalized Haar measure $\nu$ on the rotation group $SO_d$, we get
\begin{equation}\label{rotationformula}\int_{SO_d} [F,\vartheta G]\nu (d\vartheta ) = c_{j,d}^{m,d-m+j} ,\end{equation}
for example from Theorem 5.3.1 in \cite{SW} (with a constant $ c_{j,d}^{m,d-m+j}=c^d_jc^m_dc^{d-m+j}_d$ given explicitly by formula (5.5) in \cite{SW}).
Hence, the corresponding integration over the group $G_d$ of rigid motions (with invariant measure $\mu$) yields
\[ \int _{G_d} {\cal H}^j_{skel}(P \cap gQ) \, \mu(d g) =\sum_{m=j}^d c_{j,d}^{m,d-m+j}{\cal H}^m_{skel}(P){\cal H}^{d-m+j}_{skel}(Q) , \]
a result which can also be obtained by applying the principal kinematic formula to the various faces of $P$ and $Q$.

\section{$GP$-extensions}

Let ${\cal R}= U({\cal K})$ denote the class of polyconvex sets in $\R^d$ (finite unions of convex bodies) and let $U({\cal P})$ be the subclass of finite unions of polytopes. By Groemer's extension theorem (cf. \cite[Theorem 14.4.2]{SW}), every standard functional on ${\cal P}$ has an additive extension to $U({\cal P})$ (see \cite[Theorem 14.4.3]{SW}). For this extension, the inclusion-exclusion formula holds which shows that the extension is unique. In the following, we discuss similar extensions of local functionals to certain sets in $U({\cal P})$.

We say that polytopes $P,Q\in {\cal P}$ are in {\it mutual general position}, if either $P\cap Q=\emptyset$ or, for $j=0,...,d$, any $j$-face $F$ of $P\cap Q$ is the closure of the intersection of the relative interior of a $k$-face $G$ of $P$ and the relative interior of a $(d+j-k)$-face $H$ of $Q$, for some $k\in\{ j,...,d\}$. Notice that there is a basic difference between mutual general  position and the notion of general relative position which we used in the last section. For later use, we remark that, for $P,Q\in {\cal P}$, the polytopes $P, Q+x$ are in mutual general position, for $\lambda$-almost all $x\in\R^d$. We now extend the notion of mutual general position to finitely many polytopes $P_1,...,P_m$. For $\emptyset\not= J\subset \{1,...,m\}$ with $J=\{j_1,...,j_k\}$, let $P_J= P_{j_1}\cap\cdots\cap P_{j_k}$. We say that $P_1,...,P_m$ are in {\it mutual general position}, if $P_i$ and $P_J$ are in mutual general position for all $i\in\{1,...,m\}, \emptyset\not= J\subset\{1,...,m\}$ such that $i\notin J$. Let $U_{GP}({\cal P})$ be the class of finite unions of polytopes which are in mutual general position. We call each representation $P=\bigcup_{i=1}^m P_i$ of $P\in U_{GP}({\cal P})$, with polytopes $P_1,...,P_m$ which are  in mutual general position,  a {\it standard representation}. A standard representation is not unique, in general. For example, given a standard representation $P=\bigcup_{i=1}^m P_i$ with full dimensional polytopes $P_i$, we can add a polytope $P_{m+1}$ which lies in the interior of $P$ and is in mutual general position with all the $P_i$'s. This is e.g. the case, if $P_{m+1}$ is in the interior of $P_m$ and does not meet the boundaries of $P_1,...,P_{m-1}$. This situation is avoided, if we consider reduced representations. We call a standard representation  $ P=\bigcup_{i=1}^m P_i$ {\it reduced}, if there is no proper subfamily of $\{P_1,...,P_m\}$ which yields $P$ as the union set. It is obvious that each standard representation of $P\in U_{GP}({\cal P})$ contains (as a subcollection) a reduced standard representation. If $P=\bigcup_{i=1}^m P_i$ is a reduced standard representation, then $P_i\cap \bd P\not=\emptyset$, for $i=1,...,m$. More precisely, we then even have $P_i\setminus \bigcup_{j\not= i} P_j\not=\emptyset$. 

In the following theorem, we extend local functionals $\varphi$ and their local extensions $\Phi$ to $U_{GP}({\cal P})$ by the inclusion-exclusion formula. Since for the $d$-homogeneous part $\varphi^{(d)} = c_dV_d$ this extension is obvious, we may concentrate on the case where $c_d=0$. Then $\Phi(P,\cdot)$ is concentrated on the boundary of $P\in{\cal P}$. We therefore call such a $\varphi$ a {\it boundary functional}.

\begin{theorem} Let $\varphi$ be a boundary functional on ${\cal P}$ and let $\Phi$ be a local extension. Then $\varphi$ and $\Phi$ have  extensions to $U_{GP}({\cal P})$ which are given by the inclusion-exclusion formulas
\begin{align}\label{i-e-formula1}\varphi (P) &= \varphi (\bigcup_{i=1}^m P_i)\cr
&= \sum_{k=1}^m (-1)^{k-1} \sum_{1\le i_1<...<i_k\le m} \varphi (P_{i_1}\cap\dots\cap P_{i_k})
\end{align}
and
\begin{align}\label{i-e-formula2}\Phi (P,A) &= \Phi (\bigcup_{i=1}^m P_i,A)\cr
&= \sum_{k=1}^m (-1)^{k-1} \sum_{1\le i_1<...<i_k\le m} \Phi (P_{i_1}\cap\dots\cap P_{i_k},A ) ,
\end{align}
for any Borel set $A\subset \R^d$, where $P=\bigcup_{i=1}^m P_i $ is a standard representation. The extensions are independent of the choice of this standard representation, we have $\varphi (P) = \Phi (P,\R^d)$ and $\Phi (P,\cdot)$ is concentrated on the boundary of $P$.
\end{theorem}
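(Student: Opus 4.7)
The plan is to fix a standard representation $P=\bigcup_{i=1}^m P_i$ and verify in turn (i) that the right-hand side of (\ref{i-e-formula2}) defines a finite signed Borel measure on $\R^d$; (ii) that this measure is concentrated on $\partial P$; and (iii) that it is independent of the choice of standard representation. The identities $\varphi(P)=\Phi(P,\R^d)$ and (\ref{i-e-formula1}) then follow on setting $A=\R^d$. Part (i) is immediate, since each summand $\Phi(P_{i_1}\cap\cdots\cap P_{i_k},\cdot)$ is a finite signed Borel measure by the definition of local extension.

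For (ii), let $U$ be an open set disjoint from $\partial P$. If $U\cap P=\emptyset$, one may assume (by shrinking $U$) that $U\cap P_I=\emptyset$ for every nonempty $I$, and then local determination of $\Phi$ gives $\Phi(P_I,A)=0$ for every Borel $A\subset U$, so the sum vanishes term by term. The substantive case is $U\subset\inte P$. Here I would use the decomposition $\Phi=\sum_{j=0}^{d-1}\Phi^{(j)}$, available because $c_d=0$, together with the explicit face representation (\ref{func3}). The task is to show that for each $j$-face $F$ of some $P_I$ meeting $U$, the contributions $f_j(n(P_I,F))\lambda_F$ of all intersections sharing this face cancel in the alternating sum. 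The structural input is that under mutual general position every $j$-face of $P_I$ close to a point $x$ is the closure of the intersection of relative interiors of uniquely determined faces of the $P_i$, so that the spherical polytope $n(P_I,F)$ depends only on the outward-normal data active at $x$; if $x\in\inte P$ this data does not correspond to a genuine outer normal of $P$, and the set of $I$'s producing it forms a Boolean sublattice on which $\sum(-1)^{|I|-1}=0$.

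For (iii) I would use a Groemer-style reduction to one-step enlargements: suppose $\{P_1,\dots,P_m\}$ is a standard representation of $P$ and $P_0\subset P$ is a polytope for which $\{P_0,P_1,\dots,P_m\}$ is still in mutual general position. Expanding, the difference between the two values produced by (\ref{i-e-formula2}) equals
\begin{align*}
\Phi(P_0,A)-\sum_{\emptyset\neq J\subseteq\{1,\dots,m\}}(-1)^{|J|-1}\Phi(P_0\cap P_J,A).
\end{align*}
Since $P_0=\bigcup_i(P_0\cap P_i)$ is itself a standard representation of the polytope $P_0$, applying (ii) to the alternating sum shows it coincides with $\Phi(P_0,\cdot)$ on $\R^d\setminus\partial P_0$, while on $\partial P_0$ the two measures agree by local determination combined with (\ref{func3}): at a boundary point $x\in\partial P_0$ the contributing $j$-face of $P_0$ is reconstructed from the $(P_0\cap P_i)$-face contributions by the same normal-cone identification used in (ii). Any two standard representations $P=\bigcup_iP_i=\bigcup_jQ_j$ can be linked by a finite sequence of such one-step enlargements (first adjoining the $Q_j$'s, then removing the $P_i$'s), using arbitrarily small translative perturbations to secure mutual general position at each step, with invariance under such perturbations being visible from (\ref{func3}) once the interior cancellation is in hand.

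The main obstacle is the interior cancellation in (ii): making precise the claim that, at each $x\in\inte P$, the pairs $(I,F_I)$ contributing to the signed sum organise into Boolean sublattices on which the inclusion-exclusion telescopes to zero. The mutual general position hypothesis is essential, as the one-dimensional example of two intervals meeting at a shared endpoint (which violates mutual general position) shows: there the analogue of (\ref{i-e-formula2}) applied to $v_0$ would produce spurious mass at the interior shared vertex, contradicting concentration on $\partial P$.
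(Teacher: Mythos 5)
Your overall architecture (define the extension by inclusion--exclusion, show concentration on $\bd P$, show independence of the representation) matches the paper's, and your one-dimensional counterexample correctly identifies why mutual general position is needed. But the proposal has two genuine gaps. First, the interior cancellation in your step (ii) --- which you yourself flag as ``the main obstacle'' --- is the heart of the theorem and is left as an unproved claim about Boolean sublattices of face data. Moreover, the face-level combinatorics via \eqref{func3} is the wrong tool here: no decomposition into $j$-homogeneous parts is needed. The paper's mechanism is much simpler: after dissecting $A\subset\inte P$ one may assume either that $A$ meets no $\bd P_i$ at all (then every term $\Phi(P_I,A)$ vanishes individually, since $c_d=0$ forces $\Phi(P_I,\cdot)$ to be concentrated on $\bd P_I\subset\bigcup_i\bd P_i$), or that there is an index $s$ with $A\subset\inte P_s$; in the latter case local determination alone gives $\Phi(P_{i_1}\cap\dots\cap P_{i_k},A)=\Phi(P_{i_1}\cap\dots\cap P_{i_k}\cap P_s,A)$, so the subsets containing $s$ cancel pairwise against those not containing $s$ and the whole alternating sum telescopes to zero. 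Until you supply an argument of this kind, step (ii) is not a proof.

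Second, your step (iii) does not go through as described. Linking two standard representations $\bigcup_i P_i=\bigcup_j Q_j$ by one-step enlargements requires the combined family $\{P_1,\dots,P_m,Q_1,\dots,Q_n\}$ to be in mutual general position, which is not guaranteed; and your proposed repair by ``arbitrarily small translative perturbations'' destroys the very identity $\bigcup_j Q_j=P$ that makes the $Q_j$ a representation of $P$, so the perturbed family no longer represents anything comparable. (Your intermediate claim that the alternating sum over the family $\{P_0\cap P_i\}$ reproduces $\Phi(P_0,\cdot)$ is also a nontrivial consistency statement, essentially equivalent to what is being proved, and is only sketched.) The paper avoids common refinements entirely: having disposed of $A\cap P=\emptyset$ and $A\subset\inte P$, it takes $A\subset\bd P$, dissects $A$ finely, and shows that the $P_i$ meeting $A$ and the $Q_j$ meeting $A$ coincide, after re-enumeration, inside a neighborhood $U$ of $A$ (using that in a reduced representation each $P_i$ contributes an uncovered piece of $\bd P$); local determination then matches the two alternating sums term by term. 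Passing from reduced to general standard representations is then easy because any superfluous polytope misses $\bd P$, where the measure is concentrated.
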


\begin{proof} For the assertions, it is sufficient to discuss the local extension $\Phi$. 

For $P\in U_{GP}({\cal P})$, let $P=\bigcup_{i=1}^m P_i $ be a reduced standard representation. We define $\Phi (P,\cdot )$ by \eqref{i-e-formula2} and claim that the so-defined measure does not depend on the choice of the reduced standard representation. 
For $m=1$, nothing is to show. Let $m\ge 2$ and $P=\bigcup_{i=1}^m P_i$. If  $P=\bigcup_{j=1}^n Q_j$ is another reduced standard representation, then $n\ge 2$ and we have to show that
\begin{align}\label{i-e-formula3}\sum_{k=1}^m (-1)^{k-1}& \sum_{1\le i_1<...<i_k\le m} \Phi (P_{i_1}\cap\dots\cap P_{i_k},A ) \cr
&= \sum_{l=1}^n (-1)^{l-1} \sum_{1\le j_1<...<j_l\le n} \Phi (Q_{j_1}\cap\dots\cap Q_{j_l},A )
\end{align}
holds for all Borel sets $A\subset\R^d$.

If $A\cap P=\emptyset$, then both sides of \eqref{i-e-formula3} are zero. Let $A\subset \inte P$. We show that also then both sides of \eqref{i-e-formula3} are zero. For the left side, this is clear, if, for each $i=1,...,m$, we have $A\cap \bd P_i=\emptyset$, since then all $\Phi$-values occurring on the left side vanish. Otherwise, we can dissect $A$ appropriately and assume that $A\subset \bd P_i, i=1,...,r$, $A\subset \inte P_{i}, i=r+1,...,s$, and $A\cap P_i =\emptyset, i=s+1,...,m$, for some $1<r<s\le m$ (and after a suitable re-enumeration).  Then \eqref{i-e-formula2} reduces to 
\begin{align*}\Phi (P,A) &= \sum_{k=1}^{s} (-1)^{k-1} \sum_{1\le i_1<...<i_k\le s} \Phi (P_{i_1}\cap\dots\cap P_{i_k},A )\cr
&= \sum_{k=1}^{s-1} (-1)^{k-1} \sum_{1\le i_1<...<i_k\le s-1} \Phi (P_{i_1}\cap\dots\cap P_{i_k},A )\cr
&\ -\sum_{k=1}^{s-1} (-1)^{k-1} \sum_{1\le i_1<...<i_k\le s-1} \Phi (P_{i_1}\cap\dots\cap P_{i_k}\cap P_{s},A )\cr
&= 0,
\end{align*}
since $\Phi (P_{i_1}\cap\dots\cap P_{i_k},\cdot)$, for $1\le i_1<...<i_k\le s-1$, is locally defined and therefore
$$\Phi (P_{i_1}\cap\dots\cap P_{i_k},A) = \Phi (P_{i_1}\cap\dots\cap P_{i_k}\cap P_{s},A) .$$ 
By similar arguments, the right side of \eqref{i-e-formula3} is shown to be zero.

Finally, let $A\subset \bd P$. By dissecting $A$ appropriately into finitely many pieces, we may assume that $A$ does not intersect more than one disjoint closed part of $P_i\cap \bd P$, for each $i=1,...,m$, and not more than one disjoint closed part of $Q_j\cap \bd P$, for each $j=1,...,n$. We can find an open neighborhood $U$ of $A$ such that $P_i\cap A\not=\emptyset$, if and only if $(P_i\setminus \bigcup_{s\not= i}P_s)\cap (U\cap \bd P)\not=\emptyset$, and similarly $Q_j\cap A\not=\emptyset$, if and only if $(Q_j\setminus \bigcup_{t\not= j}Q_t)\cap (U\cap \bd P)\not=\emptyset$. We may assume, after a suitable re-enumeration, that $P_1\cap A\not=\emptyset,...,P_r\cap A\not=\emptyset$ and $P_{r+1}\cap A= \emptyset,...,P_m\cap A=\emptyset$, for some $1\le r\le m$. The fact that then 
$(P_i\setminus \bigcup_{s\not= i}P_s)\cap (U\cap \bd P)\not=\emptyset$ (for $i\in\{ 1,...,r\}$) guarantees that there is a $j=j(i)$ such that 
$$(P_i\setminus \bigcup_{s\not= i}P_s)\cap (U\cap \bd P) = (Q_{j(i)}\setminus \bigcup_{t\not= j(i)}Q_t)\cap (U\cap \bd P).$$ 
Interchanging the roles of the $P_i$ and the $Q_j$, and then re-enumerating the $Q_j$, we obtain that $P_1\cap U=Q_1\cap U,...,P_r\cap U=Q_r\cap U$ (which also implies $r\le n$) and $P_i\cap U = Q_j\cap U=\emptyset$ for $i,j>r$.

 Obviously, this yields
$$
P_{i_1}\cap\dots\cap P_{i_k} \cap A= Q_{i_1}\cap\dots\cap Q_{i_k} \cap A ,
$$
for $1\le i_1<...<i_k\le r, k=1,...,r$, and 
$$
P_{i_1}\cap\dots\cap P_{i_k} \cap A= \emptyset, \quad Q_{j_1}\cap\dots\cap Q_{j_l} \cap A =\emptyset,
$$
if at least on  index $i_s > r$ (respectively $j_t>r$) appears. This shows that \eqref{i-e-formula3} holds. 

We have shown that a definition of $\Phi(P,\cdot)$ by \eqref{i-e-formula2} is independent of the representation $P=\bigcup_{i=1}^m P_i$, provided this is a reduced standard representation.  If we add another polytope $P_{m+1}$ to a standard representation $P=\bigcup_{i=1}^m P_i $ such that $P=\bigcup_{i=1}^{m+1} P_i$ is a standard representation, then $P_{m+1}\cap \bd P =\emptyset$ and therefore $P_{m+1}\cap V =\emptyset$ for an open neighborhood $V$ of $\bd P$. Then, the above arguments go through, we obtain \eqref{i-e-formula2} for each standard representation of $P$ and we also have seen that $\Phi (P,\cdot)$ is concentrated on $\bd P$. 
\end{proof}

From this result, it follows that every local functional $\varphi$ on ${\cal P}$ (as well as its local extension $\Phi$) has an extension to $U_{GP}({\cal P})$ given by the inclusion-exclusion formula. We call these extensions of $\varphi$ and $\Phi$ the {\it $GP$-extensions}. 

\section{Poisson processes}

In this section, we consider a Poisson particle process $X$ with convex grains, that has a translation regular and locally finite intensity measure $\Theta\not\equiv 0$ (see \cite[Section 11.1]{SW}, for details). Thus, 
\begin{equation}\label{16.1.1}
\Theta (A) = \int_{{\cal K}_0} \int_{{\mathbb R}^d} {\bf 1}_A(K+x) \eta (K,x)\,\lambda (dx)\, {\mathbb Q} (d K),\qquad A\in{\cal B}({\cal K}),
\end{equation}
where ${\cal K}_0$ denotes the set of convex bodies $K$ with circumcenter $c(K)$ at the origin and ${\cal B}({\cal K})$ is the $\sigma$-algebra of Borel sets in $\cal K$. If $\eta$ does not depend on $K$, the {\it spatial intensity function} $\eta$ and the {\it grain distribution} $\Q$ are uniquely determined by \eqref{16.1.1}. The Poisson process $X$ is {\it stationary}, if and only if $\eta$ is a constant $\gamma >0$ (the {\it intensity}). 

We assume, throughout the following, that ${\mathbb Q}$ is concentrated on ${\cal P}_0 := {\cal K}_0 \cap {\cal P}$, hence the particles are (almost surely) convex polytopes. The local finiteness of $\Theta$ is then equivalent to 
\begin{equation}\label{integrability1}
\int_{{\cal P}_0} \int_{{\mathbb R}^d} {\bf 1}\{(P+x)\cap C\not=\emptyset\} \eta (P,x)\,\lambda (dx)\, {\mathbb Q} (d P)<\infty,
\end{equation}
for any compact $C\subset\R^d$ (see \cite[(11.4)]{SW}). 

Let $\varphi$ be a local functional on $\cal P$  with local extension $\Phi$ and let $\Phi^{(j)}_{m_1,...,m_k}$ be the corresponding mixed kernels which exist by Theorem \ref{trans1}. Without loss of generality, we assume $\Phi\ge 0$ and hence  $\Phi^{(j)}_{m_1,...,m_k}\ge 0$, for all $j\in\{ 0,\dots ,d\}$, $k\ge 2$, and $m_1,\dots,m_k\in \{j,\dots,d\}$ with 
$\sum_{i=1}^k m_i = (k-1)d+j.$ In order to unify the presentation, we also assume now $c_d=1$ and put  $\Phi^{(j)}_{j}= \Phi^{(j)}, j=0,...,d$. 

The following result is an analog of Corollary 11.1.4 in \cite{SW} and follows in a similar way from Theorem  \ref{trans1} above. For $k\in {\mathbb N}$, the process $X^k_{\not=}$ consists of $k$-tuples $(P_1,...,P_k)$ of (pairwise) different polytopes $P_i\in X$. The intensity measure of $X^k_{\not=}$ is the {\it $k$th factorial moment measure} of $X$.

 \begin{theorem}\label{Poisson} Let $X$ be a Poisson process of convex polytopes in ${\mathbb R}^d$ with translation regular and locally finite intensity measure, let $k\in{\mathbb N}$, $j\in\{ 0,\dots ,d\}$ and $m_1,\dots,m_k\in \{j,\dots,d\}$ with 
$$
\sum_{i=1}^k m_i = (k-1)d+j.
$$
Assume that
\begin{align}\label{integrability2}
&\int_{({\cal P}_0)^k}\int_{({\mathbb R}^d)^k} \varphi^{(j)}_{m_1,\dots ,m_k}(P_1,\dots ,P_k){\bf 1}\{P_1^{x_1}\cap C\not=\emptyset\} \eta (P_1,x_1)\cdots \cr
&\times {\bf 1}\{P_k^{x_k}\cap C\not=\emptyset\} \eta (P_k,x_k)\,\lambda^k (d(x_1,...,x_k))\,{\mathbb Q}^k (d (P_1,...,P_k))<\infty ,\quad\quad\end{align}
for any compact $C\subset\R^d$.

Then, 
$${\mathbb E} \sum_{(P_1,\dots ,P_k)\in X^k_{\not=}} \Phi^{(j)}_{m_1,\dots ,m_k}(P_1,\dots ,P_k;\cdot )$$ 
is a locally finite measure on $({\mathbb R}^d)^k$ which is absolutely continuous with respect to $\lambda^k$, and a density is given by  
\begin{eqnarray*}
&&\overline \varphi_{ m_1,\dots,m_k}^{(j)}(X,\dots ,X;z_1,\dots ,z_k) \\ 
&&= \int_{({\cal P}_0)^k}\int_{({\mathbb R}^d)^k} \eta(P_1,z_1-x_1)\cdots \eta(P_k,z_k-x_k)\,\\
&& \hspace*{4mm}\times\,\Phi^{(j)}_{m_1,\dots,m_k}(P_{1},\dots,P_{k};d (x_1,\dots ,x_k))\,{\mathbb Q}^k (d (P_1,..., P_k))
\end{eqnarray*}
for $\lambda^k$-almost all $(z_1,\dots,z_k)\in({\mathbb R}^d)^k$.
\end{theorem}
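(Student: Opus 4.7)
The plan is to proceed in close analogy with the proof of Corollary 11.1.4 in \cite{SW}, combining the factorial moment formula for Poisson processes with the translation covariance of the mixed kernels $\Phi^{(j)}_{m_1,\dots,m_k}$. The explicit representation \eqref{mixedpol} encodes the latter: since the normal cone $n(P_1,\dots,P_k;F_1,\dots,F_k)$ and the determinant $[F_1,\dots,F_k]$ are translation invariant and $\lambda_{F+x}$ is the push-forward of $\lambda_F$ under $y\mapsto y+x$, a short computation yields
\begin{align*}
&\Phi^{(j)}_{m_1,\dots,m_k}(P_1+x_1,\dots,P_k+x_k;B)\\
&\qquad =\int_{(\R^d)^k}\mathbf{1}_B(y_1+x_1,\dots,y_k+x_k)\,\Phi^{(j)}_{m_1,\dots,m_k}(P_1,\dots,P_k;d(y_1,\dots,y_k))
\end{align*}
for every Borel set $B\subset (\R^d)^k$.

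For a bounded Borel set $B$, I would first use the fact that the $k$th factorial moment measure of $X$ equals the product $\Theta^k$ of its intensity measure, and rewrite
$$\mathbb{E}\sum_{(P_1,\dots,P_k)\in X^k_{\not=}}\Phi^{(j)}_{m_1,\dots,m_k}(P_1,\dots,P_k;B)=\int_{{\cal K}^k}\Phi^{(j)}_{m_1,\dots,m_k}(P_1,\dots,P_k;B)\,\Theta^k(d(P_1,\dots,P_k)).$$
Substituting the representation \eqref{16.1.1} of $\Theta$, the right side becomes a multiple integral over $({\cal P}_0\times\R^d)^k$ against $\mathbb{Q}^k\otimes\lambda^k$ of the integrand $\Phi^{(j)}_{m_1,\dots,m_k}(P_1+x_1,\dots,P_k+x_k;B)\prod_{i=1}^k\eta(P_i,x_i)$. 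Applying the translation-covariance identity above, performing the change of variables $z_i=y_i+x_i$ in the inner $\lambda^k$-integral, and interchanging the order of integration rewrites this as
$$\int_B\bar\varphi^{(j)}_{m_1,\dots,m_k}(X,\dots,X;z_1,\dots,z_k)\,\lambda^k(d(z_1,\dots,z_k)),$$
with $\bar\varphi^{(j)}_{m_1,\dots,m_k}$ as in the statement. Since $B$ was arbitrary, the density identification follows.

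The main technical obstacle is justifying the Fubini interchange and verifying local finiteness of the expected measure; both are handled by the hypothesis \eqref{integrability2}. Bounding $\Phi^{(j)}_{m_1,\dots,m_k}(P_1,\dots,P_k;B)$ by its total mass $\varphi^{(j)}_{m_1,\dots,m_k}(P_1,\dots,P_k)$ and observing that only configurations in which the translated polytope meets a sufficiently large compact $C$ absorbing $B$ and the relevant supports contribute, one finds that the full multiple integral is dominated by the finite expression in \eqref{integrability2}. This legitimizes Fubini, shows that the density is well defined $\lambda^k$-almost everywhere, and yields local finiteness on bounded sets. Measurability of the integrands is standard, following from \eqref{mixedpol} exactly as in the measurability argument at the start of the proof of Theorem \ref{trans1}.
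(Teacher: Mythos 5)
Your proposal follows essentially the same route as the paper's proof: both rewrite the expected sum via the Poisson factorial moment measure $\Theta^k$ and the representation \eqref{16.1.1}, exploit the translation covariance of $\Phi^{(j)}_{m_1,\dots,m_k}$ coming from \eqref{mixedpol} together with the substitution $z_i=y_i+x_i$, and justify local finiteness and the Fubini interchange by bounding the mixed measure of a bounded set by its total mass $\varphi^{(j)}_{m_1,\dots,m_k}$ times indicators that the shifted polytopes meet a fixed compact set, which is exactly what hypothesis \eqref{integrability2} controls. The argument is correct (with the standing reduction to $\Phi\ge 0$ made at the start of that section) and matches the paper's proof in all essential steps.
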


\begin{proof} We give a sketch of the proof and refer to \cite{SW}, for details and definitions. First, by Campbell's theorem, the form of factorial moment measures for Poisson processes, and Fubini's theorem, we obtain, for bounded Borel sets $B_1,...,B_k$ in $\R^d$,
\begin{align*}
&{\mathbb E} \sum_{(P_1,\dots ,P_k)\in X^k_{\not=}} \Phi^{(j)}_{m_1,\dots ,m_k}(P_1,\dots ,P_k;B_1\times\cdots\times B_k )\cr
&\  = \int_{({\cal P}_0)^k}\int_{({\mathbb R}^d)^k} \Phi^{(j)}_{m_1,\dots,m_k}(P_{1}^{y_1},\dots,P_{k}^{y_k}; B_1\times\cdots \times B_k)\cr
&\quad  \times \eta(P_1,y_1)\cdots \eta(P_k,y_k)\lambda^k(d(y_1,...,y_k)) {\mathbb Q}^k (d (P_1,...,P_k)).
\end{align*}
We choose $r>0$, such that $B_1,...,B_k$ are contained in the interior of the cube $rW$. Then
\begin{align*}
&\Phi^{(j)}_{m_1,\dots,m_k}(P_{1}^{y_1},\dots,P_{k}^{y_k}; B_1\times\cdots \times B_k)\cr
&\ \le \varphi^{(j)}_{m_1,\dots,m_k}(P_{1},\dots,P_{k})
{\bf 1}\{P_{1}^{y_1}\cap rW\not=\emptyset\}\cdots {\bf 1}\{P_{k}^{y_k}\cap rW\not=\emptyset\} .
\end{align*}
From \eqref{integrability2}, we thus obtain
\begin{align*}
&{\mathbb E} \sum_{(P_1,\dots ,P_k)\in X^k_{\not=}} \Phi^{(j)}_{m_1,\dots ,m_k}(P_1,\dots ,P_k;B_1\times\cdots\times B_k )\cr
& \le \int_{({\cal P}_0)^k}\int_{({\mathbb R}^d)^k} \varphi^{(j)}_{m_1,\dots ,m_k}(P_1,\dots ,P_k){\bf 1}\{P_1^{y_1}\cap rW\not=\emptyset\} \eta (P_1,y_1)\cdots \cr
&\quad\times {\bf 1}\{P_k^{y_k}\cap rW\not=\emptyset\} \eta (P_k,y_k)\,\lambda^k (d(y_1,...,y_k))\,{\mathbb Q}^k (d (P_1,...,P_k))\cr
&<\infty .
\end{align*}
Therefore, 
$${\mathbb E} \sum_{(P_1,\dots ,P_k)\in X^k_{\not=}} \Phi^{(j)}_{m_1,\dots ,m_k}(P_1,\dots ,P_k;\cdot )$$ 
is locally finite.

In the same way, we get
\begin{align*}
&{\mathbb E} \sum_{(P_1,\dots ,P_k)\in X^k_{\not=}} \Phi^{(j)}_{m_1,\dots ,m_k}(P_1,\dots ,P_k;B_1\times\cdots\times B_k )\cr
&\  = \int_{{\cal P}_0}\dots\int_{{\cal P}_0}\int_{({\mathbb R}^d)^k} \Phi^{(j)}_{m_1,\dots,m_k}(P_{1}^{y_1},\dots,P_{k}^{y_k}; B_1\times\cdots \times B_k)\cr
&\quad  \times \eta(P_1,y_1)\cdots \eta(P_k,y_k)\lambda^k(d(y_1,...,y_k)) {\mathbb Q} (d P_1)\cdots \,{\mathbb Q} (d P_k)\cr
&\  = \int_{{\cal P}_0}\dots\int_{{\cal P}_0}\int_{({\mathbb R}^d)^k} \int_{({\mathbb R}^d)^k}
{\bf 1}_{B_1-y_1}(x_1)\cdots{\bf 1}_{B_k-y_k}(x_k) 
 \eta(P_1,y_1)\cdots \eta(P_k,y_k)\cr
&\quad \times\Phi^{(j)}_{m_1,\dots,m_k}(P_{1} ,\dots,P_{k} ; d(x_1,...,x_k))\lambda^k(d(y_1,...,y_k)) {\mathbb Q} (d P_1)\cdots \,{\mathbb Q} (d P_k)\cr
&\  = \int_{{\cal P}_0}\dots\int_{{\cal P}_0}\int_{({\mathbb R}^d)^k} \int_{({\mathbb R}^d)^k}
{\bf 1}_{B_1}(z_1)\cdots{\bf 1}_{B_k}(z_k) 
 \eta(P_1,z_1-x_1)\cdots \eta(P_k,z_k-x_k)\cr
 &\quad \times\Phi^{(j)}_{m_1,\dots,m_k}(P_{1} ,\dots,P_{k} ; d(x_1,...,x_k))\lambda^k(d(z_1,...,z_k)) {\mathbb Q} (d P_1)\cdots \,{\mathbb Q} (d P_k)\cr
&\  = \int_{B_1\times\cdots\times B_k}\Biggl(\int_{{\cal P}_0}\dots\int_{{\cal P}_0}\int_{({\mathbb R}^d)^k} 
 \eta(P_1,z_1-x_1)\cdots \eta(P_k,z_k-x_k)\cr
 &\quad \times\Phi^{(j)}_{m_1,\dots,m_k}(P_{1} ,\dots,P_{k} ; d(x_1,...,x_k)){\mathbb Q} (d P_1)\cdots \,{\mathbb Q} (d P_k)\Biggr)\lambda^k(d(z_1,...,z_k)) 
\end{align*}
which shows the absolute continuity and the stated form of the density.
\end{proof}

For $k=1$, we have $\Phi^{(j)}_j = \Phi^{(j)}$ and correspondingly write $\overline\varphi^{(j)}(X,\cdot)$ for $\overline\varphi^{(j)}_j(X;\cdot)$. The mean value $\overline\varphi^{(j)}(X,\cdot)$ thus gives the specific $\varphi^{(j)}$-value for $X$ (the mean value of $\varphi^{(j)}$ per unit volume of $X$). The integrability assumption \eqref{integrability2} then reduces to
$$
\int_{{\cal P}_0}\int_{{\mathbb R}^d} \varphi^{(j)}(P){\bf 1}\{P^{x}\cap C\not=\emptyset\} \eta (P,x)\,\lambda (dx)\,{\mathbb Q}
(dP)<\infty ,$$
for compact $C\subset\R^d$. 

For $k>d$, the decomposition property \eqref{decomp} implies that at least $k-d$ of the indices $m_1,...,m_k$ are equal to $d$. If we assume, without loss of generality, that $m_{d+1}=\cdots =m_k=d$, then the integrability condition \eqref{integrability2} can be replaced by a simpler one and the assertion of Theorem \ref{Poisson} can also be simplified. Namely,  it is then sufficient to require 
\begin{align}\label{integrability3}
&\int_{({\cal P}_0)^d}\int_{({\mathbb R}^d)^d} \varphi^{(j)}_{m_1,\dots ,m_d}(P_1,\dots ,P_d){\bf 1}\{P_1^{x_1}\cap C\not=\emptyset\} \eta (P_1,x_1)\cdots \cr
&\times {\bf 1}\{P_d^{x_d}\cap C\not=\emptyset\} \eta (P_d,x_d)\,\lambda^d (d(x_1,...,x_d))\,{\mathbb Q}^d (d (P_1,...,P_d))<\infty .
\end{align}
In the proof, it is then used that
\begin{align*}
&{\mathbb E} \sum_{(P_1,\dots ,P_k)\in X^k_{\not=}} \Phi^{(j)}_{m_1,\dots ,m_k}(P_1,\dots ,P_k;B_1\times\cdots\times B_k )\cr
&\  = \int_{({\cal P}_0)^k}\int_{({\mathbb R}^d)^k} \Phi^{(j)}_{m_1,\dots,m_k}(P_{1}^{y_1},\dots,P_{k}^{y_k}; B_1\times\cdots \times B_k)\cr
&\quad  \times \eta(P_1,y_1)\cdots \eta(P_k,y_k)\lambda^k(d(y_1,...,y_k)) {\mathbb Q}^k (d (P_1,...,P_k))\cr
&\  = \int_{({\cal P}_0)^d}\int_{({\mathbb R}^d)^d} \Phi^{(j)}_{m_1,\dots,m_d}(P_{1}^{y_1},\dots,P_{d}^{y_d}; B_1\times\cdots \times B_d)\cr
&\quad  \times \eta(P_1,y_1)\cdots \eta(P_d,y_d)\lambda^d(d(y_1,...,y_d)) {\mathbb Q}^d (d (P_1,...,P_d))\cr
&\quad\times \prod_{i=d+1}^k \int_{{\cal P}_0}\int_{{\mathbb R}^d} \lambda_{P^y}( B_i) \eta(P,y)\lambda(dy) {\mathbb Q}
(dP)\cr
&\ \le \int_{({\cal P}_0)^d}\int_{({\mathbb R}^d)^d} \varphi^{(j)}_{m_1,\dots ,m_d}(P_1,\dots ,P_d){\bf 1}\{P_1^{y_1}\cap rW\not=\emptyset\} \eta (P_1,y_1)\cdots \cr
&\quad\times {\bf 1}\{P_d^{y_d}\cap rW\not=\emptyset\} \eta (P_d,y_d)\,\lambda^d (d(y_1,...,y_d))\,{\mathbb Q}^d (d (P_1,...,P_k))\cr
&\quad\times\left( \int_{{\cal P}_0}\int_{{\mathbb R}^d} {\bf 1}\{P^{y}\cap rW\not=\emptyset\}  \eta(P,y)\lambda(dy) {\mathbb Q}
(dP)\right)^{k-d}<\infty ,
\end{align*}
in view of \eqref{integrability3} and \eqref{integrability1}.

For the density, we then get
\begin{align*}
&\overline \varphi_{ m_1,\dots,m_d,d,...,d}^{(j)}(X,\dots ,X;z_1,\dots ,z_k) \cr
&= \int_{({\cal P}_0)^k}\int_{({\mathbb R}^d)^k} \eta(P_1,z_1-x_1)\cdots \eta(P_k,z_k-x_k)\cr
& \quad\times\,\Phi^{(j)}_{m_1,\dots,m_d,d,...,d}(P_{1},\dots,P_{k};d (x_1,\dots ,x_k))\,{\mathbb Q}^k (d (P_1,..., P_k))\cr
&= \int_{({\cal P}_0)^d}\int_{({\mathbb R}^d)^d} \eta(P_1,z_1-x_1)\cdots \eta(P_d,z_d-x_d)\cr
& \quad\times\,\Phi^{(j)}_{m_1,\dots,m_d}(P_{1},\dots,P_{d};d (x_1,\dots ,x_d))\,{\mathbb Q}^d (d (P_1,..., P_d))\cr
& \quad\times \prod_{i=d+1}^k \int_{{\cal P}_0}\int_{P}\eta(P,z_i-x)\lambda (dx){\mathbb Q}(dP)\cr
&= \overline \varphi_{ m_1,\dots,m_d}^{(j)}(X,\dots ,X;z_1,\dots ,z_d)\overline V_d(X,z_{d+1})\cdots \overline V_d(X,z_{k}) .
\end{align*}

We next introduce and study intersection densities. For $k\ge 2$, we call 
$$X_k :=\{ P_1\cap\dots\cap P_k : (P_1,...,P_k)\in X^k_{\not=}\}$$ the $k$th {\it intersection process} of $X$. It is again a process of polytopes, but not a Poisson process anymore. However, it is almost surely simple (each polytope appears at most once). Its intensity measure $\Theta_k$ is the image of the measure
\begin{align*}
A\mapsto & \frac{1}{k!}\int_{({\cal P}_0)^k}\int_{({\mathbb R}^d)^k} {\bf 1}\{(P_1^{y_1},...,P_k^{y_k})\in A\} \eta (P_1,y_1)\cdots \eta (P_k,y_k)\cr
&\quad\times \lambda^k (d(y_1,...,y_k))\,{\mathbb Q}^k (d (P_1,...,P_k)),\quad A\in{\cal B} (({\cal P})^k),
\end{align*}
under the measurable mapping $(P_1,...,P_k)\mapsto P_1\cap\dots\cap P_k$. We now define the $k$th {\it intersection density} of $X$ for the local functional $\varphi$ (with local extension $\Phi$) as the Radon-Nikodym derivative $\overline\varphi (X_k,\cdot)$ of the measure
$${\mathbb E} \sum_{P\in X_k} \Phi (P,\cdot)
$$
with respect to $\lambda$ (provided this measure is absolutely continuous). In the following theorem, we concentrate on the $j$-homogeneous part $\varphi^{(j)}$ of $\varphi$, show that the intersection density $\overline\varphi^{(j)} (X_k,\cdot)$ exists and give an explicit formula for it. The result follows from Theorem \ref{Poisson} together with Theorem \ref{trans1}. The proof is analogous to the one given above, therefore we leave it out. 
 
\begin{theorem}\label{Poisson2} Let $X$ be a Poisson process of convex polytopes in ${\mathbb R}^d$ with translation regular and locally finite intensity measure, let $j\in\{ 0,\dots ,d\}$, and let $k\ge 2$. 
Assume that \eqref{integrability2} holds
for compact $C\subset\R^d$ and all $m_1,\dots,m_k\in \{j,\dots,d\}$ with 
$\sum_{i=1}^k m_i = (k-1)d+j.$

Then, 
$${\mathbb E} \sum_{P\in X_k} \Phi^{(j)}(P,\cdot )$$ 
is a locally finite measure on ${\mathbb R}^d$ which is absolutely continuous with respect to $\lambda$, and a density is given by  
\begin{align*}
\overline \varphi^{(j)}(X_k,z)&= \sum_{m_1,\dots, m_k=j\atop m_1+\dots +m_k=(k-1)d+j}^{d} \frac{1}{k!}\int_{({\cal P}_0)^k}\int_{({\mathbb R}^d)^k} \eta(P_1,z-x_1)\cdots \eta(P_k,z-x_k)\cr
& \hspace*{4mm}\times\,\Phi^{(j)}_{m_1,\dots,m_k}(P_{1},\dots,P_{k};d (x_1,\dots ,x_k))\,{\mathbb Q}^k (d (P_1,..., P_k)) ,
\end{align*}
for $\lambda^k$-almost all $z\in{\mathbb R}^d$.
\end{theorem}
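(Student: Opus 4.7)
The plan is to pass from the sum over the intersection process $X_k$ to a sum over ordered tuples $(P_1,\dots,P_k)\in X^k_{\not=}$, apply Campbell's formula together with the explicit form of the $k$th factorial moment measure of the Poisson process $X$, and then invoke the iterated translative formula of Theorem~\ref{trans1} to convert the integral of $\Phi^{(j)}$ over intersections into a sum of integrals against the mixed measures $\Phi^{(j)}_{m_1,\dots,m_k}$. The $k!$-fold over-counting in the definition of $\Theta_k$ together with Campbell's theorem and \eqref{16.1.1} give, for a bounded Borel set $B\subset{\mathbb R}^d$,
$$
{\mathbb E}\sum_{P\in X_k}\Phi^{(j)}(P,B) = \frac{1}{k!}\int_{({\cal P}_0)^k}\int_{({\mathbb R}^d)^k} \Phi^{(j)}(P_1^{y_1}\cap\dots\cap P_k^{y_k},B)\,\prod_{i=1}^k\eta(P_i,y_i)\,\lambda^k(d(y_1,\dots,y_k))\,{\mathbb Q}^k(d(P_1,\dots,P_k))
$$
as the starting point.

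For fixed $P_1,\dots,P_k$, I would substitute $x_i := y_i-y_1$ ($i=2,\dots,k$) and use translation covariance of $\Phi^{(j)}$ to rewrite
$$
\Phi^{(j)}(P_1^{y_1}\cap\dots\cap P_k^{y_k},B) = \int{\bf 1}_B(u+y_1)\,\Phi^{(j)}(P_1\cap P_2^{x_2}\cap\dots\cap P_k^{x_k},du),
$$
then set $z:=u+y_1$, so that $y_1 = z-u$ and $y_1+x_i = z-u+x_i$. At this point the resulting inner integral over the $(u,x_2,\dots,x_k)$-variables is of the form treated by \eqref{mixedint2}. Applying \eqref{mixedint2} with the test function $f(u_1,\dots,u_k) := \prod_{i=1}^k\eta(P_i,z-u_i)$ — and observing that the substitution $u_1=x_1$, $u_i=x_1-x_i$ ($i\ge 2$) on the left-hand side of \eqref{mixedint2} produces exactly the integrand $\eta(P_1,z-x_1)\prod_{i\ge 2}\eta(P_i,z-x_1+x_i)$ that has arisen — converts the expectation into
$$
\frac{1}{k!}\int_{({\cal P}_0)^k}\int_{{\mathbb R}^d}{\bf 1}_B(z)\sum_{m_1,\dots,m_k=j\atop m_1+\dots+m_k=(k-1)d+j}^d\int_{({\mathbb R}^d)^k}\prod_{i=1}^k\eta(P_i,z-x_i)\,\Phi^{(j)}_{m_1,\dots,m_k}(P_1,\dots,P_k;d(x_1,\dots,x_k))\,\lambda(dz)\,{\mathbb Q}^k(d(P_1,\dots,P_k)).
$$
A final Fubini step pulling the $\lambda(dz)$-integration against ${\bf 1}_B(z)$ to the outside yields both the absolute continuity statement and the claimed form of the density $\overline\varphi^{(j)}(X_k,z)$.

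The main obstacles are the repeated uses of Fubini and the application of \eqref{mixedint2} to a test function $f$ which is only measurable in the $u_i$'s (since the spatial intensity function $\eta$ need not be continuous). Absolute integrability and local finiteness of the measure ${\mathbb E}\sum_{P\in X_k}\Phi^{(j)}(P,\cdot)$ are controlled exactly as in the proof of Theorem~\ref{Poisson}: after reducing to $\Phi\ge 0$ via the decomposition $f_j = f_j^+ - f_j^-$, and majorizing $\Phi^{(j)}_{m_1,\dots,m_k}(P_1^{y_1},\dots,P_k^{y_k};B_1\times\dots\times B_k)$ by $\varphi^{(j)}_{m_1,\dots,m_k}(P_1,\dots,P_k)\prod_i{\bf 1}\{P_i^{y_i}\cap rW\not=\emptyset\}$ for $B_1,\dots,B_k\subset rW$, the hypothesis \eqref{integrability2} supplies the required finiteness. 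The extension of \eqref{mixedint2} from continuous $f$ to bounded Borel $f$ follows from a standard monotone class argument, since for fixed polytopes both sides of \eqref{mixedint2} are finite signed measures in $(x_1,\dots,x_k)$ agreeing on the dense algebra of continuous functions.
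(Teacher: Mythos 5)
Your argument is correct and follows exactly the route the paper intends: the paper omits this proof, stating only that it is ``analogous'' to that of Theorem \ref{Poisson}, and your chain --- passing to ordered tuples with the $1/k!$ from the simplicity of $X_k$, Campbell's theorem with the Poisson factorial moment measures, the translative formula in the form \eqref{mixedint2}, and a final Fubini step, with integrability secured by \eqref{integrability2} after reducing to $f_j\ge 0$ --- is precisely that argument. The only point you supply beyond the paper is the extension of \eqref{mixedint2} from continuous to Borel test functions, which is standard and correctly handled.
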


Comparing the result with Theorem \ref{Poisson}, we obtain
$$
\overline \varphi^{(j)}(X_k,z)= \sum_{m_1,\dots, m_k=j\atop m_1+\dots +m_k=(k-1)d+j}^{d} \frac{1}{k!}\
\overline \varphi_{ m_1,\dots,m_k}^{(j)}(X,\dots ,X;z,\dots ,z) .
$$
We can use the decomposition property \eqref{decomp} to simplify some of the formulas. Namely, for $j=d$, we get
\begin{align*}
\overline \varphi^{(d)}(X_k,z)&= \frac{1}{k!}\overline V_d(X,z)^{k}  
\end{align*}
(remember that we assumed $c_d=1$, in this section). For $j<d$, 
let $s$ be the number of indices $m_1,...,m_k$ which are smaller than $d$, $1\le s\le (d-j)\wedge k$ (here $\wedge$ denotes the minimum). Then,
\begin{align*}
&\sum_{m_1,\dots, m_k=j\atop m_1+\dots +m_k=(k-1)d+j}^{d} \frac{1}{k!}\
\overline \varphi_{ m_1,\dots,m_k}^{(j)}(X,\dots ,X;z,\dots ,z)\cr
&\quad =\sum_{s=1}^{(d-j)\wedge k}{k\choose s} \sum_{m_1,\dots, m_s=j\atop m_1+\dots +m_d=(s-1)d+j}^{d-1} \frac{1}{k!}\
\overline \varphi_{ m_1,\dots,m_s}^{(j)}(X,\dots ,X;z,\dots ,z) \overline V_d(X,z)^{k-s}\cr
&\quad = \sum_{s=1}^{(d-j)\wedge k} \sum_{m_1,\dots, m_s=j\atop m_1+\dots +m_d=(s-1)d+j}^{d-1} \frac{1}{s!}\
\overline \varphi_{ m_1,\dots,m_s}^{(j)}(X,\dots ,X;z,\dots ,z)\frac{\overline V_d(X,z)^{k-s}}{(k-s)!} . 
\end{align*}

\begin{koro}\label{koro1} 
For the intersection densities, we have
\begin{align*}
\overline \varphi^{(d)}(X_k,z)&= \frac{1}{k!}\overline V_d(X,z)^{k}  
\end{align*}
and
\begin{align*}
&\overline \varphi^{(j)}(X_k,z)\cr
&\ = \sum_{s=1}^{(d-j)\wedge k} \sum_{m_1,\dots, m_s=j\atop m_1+\dots +m_d=(s-1)d+j}^{d-1} \frac{1}{s!}\
\overline \varphi_{ m_1,\dots,m_s}^{(j)}(X,\dots ,X;z,\dots ,z)\frac{\overline V_d(X,z)^{k-s}}{(k-s)!}  
\end{align*}
for $j\in\{ 0,...,d-1\}$ and $k=1,2,...$.
\end{koro}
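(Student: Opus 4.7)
The plan is to derive both identities directly from the density formula of Theorem \ref{Poisson2}, by partitioning the admissible index tuples $(m_1,\dots,m_k)$ according to how many of their entries equal $d$, and then collapsing the resulting mixed densities via the decomposition property \eqref{decomp}.

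For the first assertion I would note that when $j=d$ the constraints $j \le m_i \le d$ and $\sum_i m_i = (k-1)d + d = kd$ force $m_1=\cdots=m_k=d$. Iterating \eqref{decomp} gives
$$\Phi^{(d)}_{d,\dots,d}(P_1,\dots,P_k; A_1 \times \cdots \times A_k) = \prod_{i=1}^{k} \lambda(P_i \cap A_i),$$
and substituting this into Theorem \ref{Poisson} at $z_1=\cdots=z_k=z$ (together with the standing assumption $c_d=1$) yields $\overline\varphi^{(d)}_{d,\dots,d}(X,\dots,X;z,\dots,z) = \overline V_d(X,z)^k$; dividing by $k!$ produces the first line of the corollary.

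For the second assertion I would group the tuples summed over in Theorem \ref{Poisson2} according to $s$, the number of indices strictly less than $d$. The other $k-s$ entries equal $d$, while the $s$ entries in $\{j,\dots,d-1\}$ must sum to $(k-1)d+j - (k-s)d = (s-1)d + j$. Since this target can be at most $s(d-1)$, one obtains the constraint $s \le d-j$, and trivially $s \le k$, giving the outer summation range $1 \le s \le (d-j)\wedge k$. By the symmetry of the mixed functionals recorded after Corollary \ref{trans2}, each unordered pattern with $s$ small indices arises $\binom{k}{s}$ times; applying \eqref{decomp} to the trailing $d$-entries and transferring this splitting to the densities via Theorem \ref{Poisson} yields
$$\overline\varphi^{(j)}_{m_1,\dots,m_s,d,\dots,d}(X,\dots,X;z,\dots,z) = \overline\varphi^{(j)}_{m_1,\dots,m_s}(X,\dots,X;z,\dots,z)\,\overline V_d(X,z)^{k-s}.$$
Combining the counting factor $\binom{k}{s}/k! = 1/\bigl(s!(k-s)!\bigr)$ with this product decomposition reproduces the stated formula.

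The argument is essentially organizational; no step is conceptually demanding, since both \eqref{decomp} and the density formula are already in hand. The only care needed is in verifying the summation range $1 \le s \le (d-j)\wedge k$ by the elementary arithmetic above, and in justifying that \eqref{decomp}, which is stated for the mixed measures $\Phi^{(j)}_{m_1,\dots,m_k}$, passes to the densities $\overline\varphi^{(j)}_{m_1,\dots,m_k}$; this is immediate from the explicit integral representation in Theorem \ref{Poisson}, since the $\lambda(P_k\cap A_k)$-factor from \eqref{decomp} integrates against $\eta(P_k,z_k-x_k)$ to yield exactly $\overline V_d(X,z_k)$.
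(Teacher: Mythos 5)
Your proposal is correct and follows essentially the same route as the paper: starting from the density formula of Theorem \ref{Poisson2}, grouping the index tuples by the number $s$ of entries below $d$, invoking symmetry for the factor $\binom{k}{s}$ and decomposability \eqref{decomp} to peel off the $\overline V_d(X,z)^{k-s}$ factor, and simplifying $\binom{k}{s}/k!$ to $1/(s!(k-s)!)$. Your explicit verification of the range $1\le s\le (d-j)\wedge k$ and of the passage of \eqref{decomp} to the densities only makes explicit what the paper leaves implicit.
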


\section{Boolean models}

We now consider, for a Poisson process $X$ as it was discussed in the previous section, the union set $$
Z=\bigcup_{P\in X} P.$$
$Z$ is a random closed set in $\R^d$, a {\it Boolean model} with convex grains, which in our case are polytopes.
Combining Theorem \ref{Poisson2} with \cite[Theorem 11.1.2]{SW}, we obtain the following extension of \cite[Theorem 11.1.3]{SW}. Here, for a local functional $\varphi$ with local extension $\Phi$, we make use of the fact that $\Phi$ extends to sets in the extended convex ring which are locally $GP$-unions as a signed Radon measure. As we shall see, under the assumptions of this section, the union set $Z$ is almost surely a $GP$-union. Therefore, $\Phi(Z,\cdot )$ is a random signed Radon measure and its expectation ${\mathbb E}\Phi(Z,\cdot )$ is a signed Radon measure. For the following result, we need the integrability condition \eqref{integrability2}, for $j\in\{ 0,\dots ,d\}$ and all $k\in{\mathbb N}$ and $m_1,\dots,m_k\in \{j,\dots,d\}$ with $
\sum_{i=1}^k m_i = (k-1)d+j.
$ Notice that, in view of the decomposability property \eqref{decomp}, this is actually only a requirement for finitely many values of $k$.  

\begin{theorem}\label{T16.1.4}
Let $Z$ be a Boolean model in ${\mathbb R}^d$ with polytopal grains and let $\varphi$ be a local functional on ${\cal P}$ with local extension $\Phi$. Assume that \eqref{integrability2} holds, for $j\in\{ 0,\dots ,d\}$ and all $k\in{\mathbb N}$ and $m_1,\dots,m_k\in \{j,\dots,d\}$ with $\sum_{i=1}^k m_i = (k-1)d+j$. 

Then, for $j=0,...,d$, the expectation ${\mathbb E}\Phi^{(j)}(Z,\cdot )$ is a signed Radon measure which is absolutely continuous with respect to $\lambda$. For $\lambda$-almost all $z$, its density $\overline \varphi^{(j)}(Z,\cdot)$ satisfies
\begin{equation*}
\overline \varphi^{(d)}(Z,z) = 1 -\E^{-\overline \varphi^{(d)}(X,z)},
\end{equation*} 
$$
\overline \varphi^{(d-1)}(Z,z) = \E^{-\overline \varphi^{(d)}(X,z)} \overline \varphi^{(d-1)}(X,z),
$$
and
\begin{eqnarray*}
\overline  \varphi^{(j)}(Z,z)&=&\E^{-\overline \varphi^{(d)}(X,z)}\left(\overline \varphi^{(j)}(X,z) -\sum_{s=2}^{d-j}\frac{(-1)^{s}}{s!}\right.\\
&&\hspace*{4mm}\times\,\sum_{m_1,\dots, m_s=j+1\atop m_1+\dots +m_s=(s-1)d+j}^{d-1} \overline \varphi^{(j)}_{m_1,\dots ,m_s}(X,\dots,X;z,\dots ,z)\Biggr),
\end{eqnarray*}
for $j=0,\dots ,d-2.$
\end{theorem}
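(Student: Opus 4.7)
The plan is to apply the $GP$-extension of Section 6 to the random union $Z$, rewrite $\Phi^{(j)}(Z,\cdot)$ pointwise via inclusion-exclusion, take expectation term by term using Theorem \ref{Poisson2}, and collapse the resulting infinite series by recognising the Taylor series of $\E^{-\overline\varphi^{(d)}(X,z)}$. The argument parallels the computation leading to \cite[Theorem 11.1.3]{SW}, with Corollary \ref{koro1} supplying the combinatorial content previously provided by the iterated translative formula for intrinsic volumes.

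First I would verify that $Z$ is almost surely a local $GP$-union. Local finiteness of the intensity measure $\Theta$ ensures that only finitely many grains of $X$ meet any compact set $C$; translation regularity of $\Theta$ makes the joint distribution of any finite tuple of grains absolutely continuous in the translation coordinates; and the observation in Section 6 that $P_1^{x_1},\dots,P_k^{x_k}$ are in mutual general position for $\lambda^k$-a.a.\ $(x_1,\dots,x_k)$ then supplies the mutual general position condition almost surely. Consequently $\Phi(Z,\cdot)$ is a well-defined random signed Radon measure, and the inclusion-exclusion formula \eqref{i-e-formula2} gives, for any bounded Borel set $A$,
$$\Phi^{(j)}(Z,A)=\sum_{k=1}^{\infty}\frac{(-1)^{k-1}}{k!}\sum_{(P_1,\dots,P_k)\in X^k_{\not=}}\Phi^{(j)}(P_1\cap\dots\cap P_k,A),$$
with almost surely only finitely many non-zero summands.

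Taking expectations and swapping ${\mathbb E}$ with the $k$-sum (justified by \eqref{integrability2} applied to the positive and negative parts of the Hahn--Jordan decomposition of $\Phi$, and using \eqref{decomp} to reduce the integrability check to $k\le d$), Theorem \ref{Poisson2} identifies each summand as $\int_A\overline\varphi^{(j)}(X_k,z)\,\lambda(dz)$. Hence ${\mathbb E}\Phi^{(j)}(Z,\cdot)$ is absolutely continuous with density
$$\overline\varphi^{(j)}(Z,z)=\sum_{k=1}^{\infty}(-1)^{k-1}\overline\varphi^{(j)}(X_k,z)$$
for $\lambda$-almost all $z$.

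It remains to evaluate this series with the help of Corollary \ref{koro1}. For $j=d$, the Corollary gives $\overline\varphi^{(d)}(X_k,z)=\overline\varphi^{(d)}(X,z)^k/k!$, and the series sums to $1-\E^{-\overline\varphi^{(d)}(X,z)}$. For $j<d$, inserting the second formula of the Corollary and interchanging the $s$- and $k$-summations (legitimate by absolute convergence) reduces the inner sum over $k\ge s$ to
$$\sum_{k=s}^{\infty}\frac{(-1)^{k-1}\,\overline\varphi^{(d)}(X,z)^{k-s}}{(k-s)!}=(-1)^{s-1}\E^{-\overline\varphi^{(d)}(X,z)}.$$
Isolating the $s=1$ term (where the constraint $m_1=(s-1)d+j$ forces $m_1=j$, giving $\overline\varphi^{(j)}(X,z)$) and rewriting $(-1)^{s-1}=-(-1)^s$ for $s\ge 2$ produces the stated formula. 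The restriction $m_i\in\{j+1,\dots,d-1\}$ for $s\ge 2$ is automatic, since $m_1+\cdots+m_s=(s-1)d+j$ together with $m_i\le d-1$ forbids any $m_i=j$ (else the remaining $s-1$ indices would have to sum to $(s-1)d$); the case $j=d-1$ degenerates to only $s=1$. The main technical obstacle is the rigorous control of the interchange of expectation with the infinite $k$-sum, for which the integrability hypothesis \eqref{integrability2}, together with \eqref{decomp}, provides the required uniform $L^1$-domination.
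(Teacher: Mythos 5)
Your proposal is correct and follows essentially the same route as the paper's proof: establish that $Z$ is almost surely a local $GP$-union, expand $\Phi^{(j)}(Z,\cdot)$ by inclusion--exclusion, interchange expectation with the $k$-sum using \eqref{integrability2} together with the decomposability property \eqref{decomp} to control the terms with $k>d$, identify the summands via Theorem \ref{Poisson2}, and resum using Corollary \ref{koro1}. Your added observation that the restriction $m_i\ge j+1$ for $s\ge 2$ is forced by $m_1+\dots+m_s=(s-1)d+j$ and $m_i\le d-1$ is a correct and welcome clarification of a point the paper leaves implicit.
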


Here, the densities for $X$ are defined as in Theorem \ref{Poisson}.

\begin{proof} We first show that, for given $r>0$, the truncated set $$Z^{(r)} = \bigcup_{P\in X, P\cap rW\not=\emptyset} P$$
is almost surely a $GP$-union. Namely, assume that the number of polytopes $P\in X$ with $P\cap rW\not=\emptyset$ is $m>0$, then the conditional distribution of these $m$ random polytopes is (up to a normalizing factor) the image of the measure
$$
\int\int \eta (P_1,x_1)\cdots \eta (P_m,x_m)\,\lambda^m (d(x_1,...,x_m))\, {\mathbb Q}^m (d( P_1,...,P_m))
$$
under $(P_1,...,P_m,x_1,...,x_m)\mapsto (P_1+x_1,...,P_m+x_m)$. It is therefore sufficient to show that, for any given polytopes $P,Q$ and independent random translations $\xi, \phi$ distributed according to the (normalized) measure $$A\mapsto \int_{A\cap (rW-P)} \eta (P,x)\lambda (dx)$$ respectively $$A\mapsto \int_{A\cap (rW-P)} \eta (Q,x)\lambda (dx),$$ the polytopes $P+\xi$ and $Q+\phi$ are in mutual general position. This, however, follows from the fact that, for any $P,Q\in{\cal P}$, the set of translations $x$, such that $P$ and $Q+x$ are not in mutual general position, has Lebesgue measure zero.

Now let $B\in{\cal B}$ be a bounded Borel set and choose $r>0$ such that $B$ is contained in the interior of $rW$. Since $Z\cap rW= Z^{(r)}\cap rW$ and since the local extension $\Phi$ is locally defined, we can put $\Phi (Z,B) = \Phi (Z^{(r)},B)$ where the latter value is obtained by $GP$-extension and is almost surely defined. In this way, we get a signed Radon measure $\Phi(Z,\cdot)$ for almost all realizations of $Z$. The same holds for the $j$-homogeneous parts $\Phi^{(j)}(Z,\cdot), j=0,...,d$. If $\nu$ is the (random) number of particles $Q_1,...,Q_\nu$ in $X$ which meet $rW$, we can use the inclusion-exclusion formula to obtain
\begin{align}\label{i-e-f}
\Phi^{(j)} (Z,B) &= \Phi^{(j)} (\bigcup_{i=1}^\nu Q_i,B)\cr
&= \sum_{k=1}^\nu (-1)^{k-1} \sum_{1\le i_1<...<i_k\le\nu} \Phi^{(j)} (Q_{i_1}\cap\dots\cap Q_{i_k}, B)\cr
&= \sum_{k=1}^\infty \frac{(-1)^{k-1}}{k!} \sum_{(P_1,...,P_k)\in X^k_{\not =}} \Phi^{(j)} (rW\cap P_1\cap\dots\cap P_k, B) .
\end{align}
Hence
\begin{align*}
{\mathbb E}|\Phi^{(j)} (Z,B)| &\le \sum_{k=1}^\infty \frac{1}{k!}\ {\mathbb E} \sum_{(P_1,...,P_k)\in X^k_{\not =}} \Phi^{(j)} (rW\cap P_1\cap\dots\cap P_k, B).
\end{align*}
As in the proof of Theorem \ref{Poisson}, we get
\begin{align*}
{\mathbb E}& \sum_{(P_1,...,P_k)\in X^k_{\not =}} \Phi^{(j)} (rW\cap P_1\cap\dots\cap P_k, B)\cr
&=\int_{({\cal P}_0)^k}\int_{({\mathbb R}^d)^k} \Phi^{(j)}(rW\cap P_{1}^{y_1}\cap\dots\cap P_{k}^{y_k}, B)\cr
&\quad  \times \eta(P_1,y_1)\cdots \eta(P_k,y_k)\lambda^k(d(y_1,...,y_k)) {\mathbb Q}^k (d (P_1,...,P_k))\cr
& = \sum_{{m_0,\dots,m_k=j}\atop{m_0+\dots +m_k=kd+j}}^d \int_{({\cal P}_0)^k}\int_{({\mathbb R}^d)^{k+1}}{\bf 1}_B(x_0)  \eta(P_1,x_0-x_1)\cdots \eta(P_k,x_0-x_k)\cr
&\quad \times \Phi^{(j)}_{m_0,m_1,\dots,m_k}(rW,P_1,\dots,P_k;d(x_0,x_1,...,x_k)){\mathbb Q}^k (d (P_1,...,P_k))\cr
& = \sum_{{m_1,\dots,m_k=j}\atop{m_1+\dots +m_k=(k-1)d+j}}^d \int_{B}\int_{({\cal P}_0)^k}\int_{({\mathbb R}^d)^{k}} \eta(P_1,x_0-x_1)\cdots \eta(P_k,x_0-x_k)\cr
&\quad \times \Phi^{(j)}_{m_1,\dots,m_k}(P_1,\dots,P_k;d(x_1,...,x_k)){\mathbb Q}^k (d (P_1,...,P_k))\lambda (dx_0). 
\end{align*}
Here, we used formula \eqref{mixedint2}, the decomposition property of mixed measures and the fact that, since  $B$ lies in the interior of $rW$, the summands with $m_0\not=d$ vanish.

The above summation rule shows that for $k>d$, at least $k-d$ of the parameters $m_1,...,m_k$ have to be $d$. Thus the decomposition property can be used again to show that, for $k>d$,
\begin{align*}
{\mathbb E}& \sum_{(P_1,...,P_k)\in X^k_{\not =}} \Phi^{(j)} (rW\cap P_1\cap\dots\cap P_k, B)\cr
& = \sum_{{m_1,\dots,m_d=j}\atop{m_1+\dots +m_d=(d-1)d+j}}^d {k\choose d}\int_{B}\Biggl[\int_{({\cal P}_0)^d}\int_{({\mathbb R}^d)^{d}} \eta(P_1,x_0-x_1)\cdots \eta(P_d,x_0-x_d)\cr
&\quad \times \Phi^{(j)}_{m_1,\dots,m_d}(P_1,\dots,P_d;d(x_1,...,x_d)){\mathbb Q}^d (d (P_1,...,P_d))\Biggr]\cr
&\quad \times\left[\int_{{\cal P}_0}\int_{{\mathbb R}^d} \eta(Q,x_0-y)\lambda_Q(dy){\mathbb Q}(dQ)\right]^{k-d}\lambda (dx_0)\cr
&\le \Biggl[\sum_{{m_1,\dots,m_d=j}\atop{m_1+\dots +m_d=(d-1)d+j}}^d \int_{B}\int_{({\cal P}_0)^d}\int_{({\mathbb R}^d)^{d}} \eta(P_1,x_0-x_1)\cdots \eta(P_d,x_0-x_d)\cr
&\quad \times \Phi^{(j)}_{m_1,\dots,m_d}(P_1,\dots,P_d;d(x_1,...,x_d)){\mathbb Q}^d (d (P_1,...,P_d))\lambda(dx_0)\Biggr]\cr
&\quad\times{k\choose d} \left[\int_{{\cal P}_0}\int_{{\mathbb R}^d} {\bf 1}\{ Q^x\cap B\not=\emptyset\}\eta(Q,x)\lambda(dx){\mathbb Q}(dQ)\right]^{k-d}
\end{align*} 
From Theorem \ref{Poisson}, we obtain that the first square bracket has a finite value $C_1$. Condition \eqref{integrability1} implies that the second square bracket is a finite number $C_2$. 
It also follows from Theorem \ref{Poisson} and the above calculations, that
\begin{align*}&\sum_{k=1}^d \frac{1}{k!}\sum_{{m_1,\dots,m_k=j}\atop{m_1+\dots +m_k=(k-1)d+j}}^d \int_{B}\int_{({\cal P}_0)^k}\int_{({\mathbb R}^d)^{k}} \eta(P_1,x_0-x_1)\cdots \eta(P_k,x_0-x_k)\cr
&\quad \times \Phi^{(j)}_{m_1,\dots,m_k}(P_1,\dots,P_k;d(x_1,...,x_k)){\mathbb Q}^k (d (P_1,...,P_k))\lambda (dx_0) 
\end{align*}
is a finite number $C_0$.

Altogether, we get
\begin{align*}
|{\mathbb E}\Phi^{(j)} (Z,B)| &\le C_0+C_1\sum_{k=d}^\infty \frac{1}{k!} {k\choose d} C_2^{k-d}=C_0+\frac{C_1}{d!}\E^{C_2}<\infty,
\end{align*}
which shows that ${\mathbb E}\Phi^{(j)} (Z,\cdot )$ is a signed Radon measure.

The calculation also shows that we can interchange summation and expectation in \eqref{i-e-f} to obtain (with Theorem \ref{Poisson2})
\begin{align*}
&{\mathbb E}  \Phi^{(j)} (Z,B) \cr
&\ = \sum_{k=1}^\infty \frac{(-1)^{k-1}}{k!} {\mathbb E}\sum_{(P_1,...,P_k)\in X^k_{\not =}} \Phi^{(j)} (P_1\cap\dots\cap P_k, B)\cr
&\ = \sum_{k=1}^\infty \frac{(-1)^{k-1}}{k!}\sum_{{m_1,\dots,m_k=j}\atop{m_1+\dots +m_k=(k-1)d+j}}^d \int_{B}\int_{({\cal P}_0)^k}\int_{({\mathbb R}^d)^{k}} \eta(P_1,z-x_1)\cdots \eta(P_k,z-x_k)\cr
&\quad \times \Phi^{(j)}_{m_1,\dots,m_k}(P_1,\dots,P_k;d(x_1,...,x_k)){\mathbb Q}^k (d (P_1,...,P_k))\lambda (dz)\cr
&\ = \sum_{k=1}^\infty {(-1)^{k-1}}\int_{B}\overline\varphi^{(j)}(X_k,z)\lambda (dz).
\end{align*}
This shows the absolute continuity and, for the density $\overline\varphi^{(j)} (Z,\cdot)$, we get (almost surely)
\begin{align*}
\overline\varphi^{(j)} (Z,z)
&\ = \sum_{k=1}^\infty {(-1)^{k-1}}\overline\varphi^{(j)}(X_k,z).
\end{align*}
For $j=d$ and $j=d-1$, Corollary \ref{koro1} yields
\begin{align*}
\overline\varphi^{(d)} (Z,z)
&\ = \sum_{k=1}^\infty {(-1)^{k-1}}\frac{1}{k!} \overline V_d(X,z)^{k} = 1-\E^{-\overline V_d(X,z)}
\end{align*}
and
\begin{align*}
\overline \varphi^{(d-1)}(Z,z)
& = \sum_{k=1}^\infty {(-1)^{k-1}}\overline \varphi^{(d-1)}(X,z)\frac{\overline V_d(X,z)^{k-1}}{(k-1)!}\cr
& = \E^{-\overline V_d(X,z)}  \varphi^{(d-1)}(X,z).
\end{align*}
For $j<d-1$, we get from Corollary \ref{koro1}
\begin{align*}
\overline \varphi^{(j)}(Z,z)
& = \sum_{k=1}^\infty {(-1)^{k-1}}\sum_{s=1}^{(d-j)\wedge k} \frac{\overline V_d(X,z)^{k-s}}{(k-s)!} \cr
&\quad\times\sum_{m_1,\dots, m_s=j\atop m_1+\dots +m_d=(s-1)d+j}^{d-1} \frac{1}{s!}\
\overline \varphi_{ m_1,\dots,m_s}^{(j)}(X,\dots ,X;z,\dots ,z) \cr
& = \sum_{s=1}^{d-j}\sum_{r=0}^\infty \frac{{(-1)^{r+s-1}}}{r!}\overline V_d(X,z)^{r} \cr
&\quad\times\sum_{m_1,\dots, m_s=j\atop m_1+\dots +m_d=(s-1)d+j}^{d-1} \frac{1}{s!}\
\overline \varphi_{ m_1,\dots,m_s}^{(j)}(X,\dots ,X;z,\dots ,z) \cr
& = \E^{-\overline V_d(X,z)}\Biggl(\overline \varphi^{(j)}(X,z) -\sum_{s=2}^{d-j} \frac{{(-1)^{s}}}{s!} \cr
&\quad\times\sum_{m_1,\dots, m_s=j\atop m_1+\dots +m_d=(s-1)d+j}^{d-1} 
\overline \varphi_{ m_1,\dots,m_s}^{(j)}(X,\dots ,X;z,\dots ,z) \Biggr).
\end{align*}
This finishes the proof of the theorem.
\end{proof}

If $Z$ is stationary, then $X$ is stationary, the intensity function $\eta$ is a constant $\gamma >0$ and the densities in Theorem \ref{Poisson} are constants,
\begin{align*}
\overline \varphi_{ m_1,\dots,m_k}^{(j)}(X,\dots ,X)
=\gamma^k \int_{{\cal P}_0}\dots\int_{{\cal P}_0}\varphi^{(j)}_{m_1,\dots,m_k}(P_{1},\dots,P_{k})\,{\mathbb Q} (d P_1)\cdots \,{\mathbb Q} (d P_k) .
\end{align*}

\begin{koro}\label{koro2}
Let $Z$ be a stationary Boolean model in ${\mathbb R}^d$ with polytopal grains and let $\varphi$ be a local functional on ${\cal P}$. Then, 
\begin{equation*}
\overline \varphi^{(d)}(Z) = 1 -\E^{-\overline \varphi^{(d)}(X)},
\end{equation*} 
$$
\overline \varphi^{(d-1)}(Z) = \E^{-\overline \varphi^{(d)}(X)} \overline \varphi^{(d-1)}(X),
$$

and
\begin{eqnarray*}
\overline  \varphi^{(j)}(Z)&=&\E^{-\overline \varphi^{(d)}(X)}\left(\overline \varphi^{(j)}(X) -\sum_{s=2}^{d-j}\frac{(-1)^{s}}{s!}\right.\\
&&\hspace*{4mm}\times\,
\sum_{m_1,\dots, m_s=j+1\atop m_1+\dots +m_s=(s-1)d+j}^{d-1} \overline \varphi^{(j)}_{m_1,\dots ,m_s}(X,\dots,X)\Biggr),
\end{eqnarray*}
for $j=0,\dots ,d-2.$
\end{koro}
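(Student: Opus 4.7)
The plan is to deduce Corollary \ref{koro2} by specializing Theorem \ref{T16.1.4} to the stationary case, so no new integral-geometric work is required; what has to be checked is that the point-dependent densities collapse to constants and match the explicit expression stated just before the corollary. First, since $Z$ is stationary, the underlying Poisson particle process $X$ is stationary, and representation \eqref{16.1.1} forces the spatial intensity function to be a positive constant $\eta \equiv \gamma$. Under the standing assumption that \eqref{integrability2} holds for every admissible tuple $(m_1,\dots,m_k)$ (inherited from Theorem \ref{T16.1.4}), Theorem \ref{Poisson} applies and furnishes the densities $\overline\varphi^{(j)}_{m_1,\dots,m_k}(X,\dots,X;z_1,\dots,z_k)$.

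Next, I would plug $\eta(P_i,z_i-x_i)=\gamma$ into the density formula from Theorem \ref{Poisson}. The factors $\eta(P_1,z_1-x_1)\cdots\eta(P_k,z_k-x_k)$ pull out as $\gamma^k$, and the remaining integration in $(x_1,\dots,x_k)$ is the total mass of the mixed measure, which by \eqref{mixedpol} equals $\varphi^{(j)}_{m_1,\dots,m_k}(P_1,\dots,P_k)$. Thus
\[
\overline \varphi^{(j)}_{m_1,\dots,m_k}(X,\dots,X;z_1,\dots,z_k)=\gamma^k\int_{{\cal P}_0}\!\!\cdots\!\!\int_{{\cal P}_0}\varphi^{(j)}_{m_1,\dots,m_k}(P_1,\dots,P_k)\,{\mathbb Q}(dP_1)\cdots{\mathbb Q}(dP_k),
\]
independent of $(z_1,\dots,z_k)$. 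This is exactly the constant $\overline\varphi^{(j)}_{m_1,\dots,m_k}(X,\dots,X)$ displayed in the paragraph preceding the corollary. In particular, the single-process density $\overline\varphi^{(j)}(X,z)$ is the constant $\overline\varphi^{(j)}(X)$, and so is $\overline V_d(X,z)=\overline\varphi^{(d)}(X)$ (recall the convention $c_d=1$ used in this section).

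Finally, I substitute these constants into the three formulas of Theorem \ref{T16.1.4}. Because every term on the right-hand sides loses its $z$-dependence, the densities $\overline\varphi^{(j)}(Z,z)$ also become constants, which we denote $\overline\varphi^{(j)}(Z)$, and the identities of Theorem \ref{T16.1.4} translate line by line into the three displayed equations of the corollary. The only step that is not mechanical is confirming that the bookkeeping of constants is consistent, i.e.\ that the constant $\overline\varphi^{(j)}_{m_1,\dots,m_s}(X,\dots,X)$ appearing in the corollary really coincides with the spatially constant value of $\overline\varphi^{(j)}_{m_1,\dots,m_s}(X,\dots,X;z,\dots,z)$; this is precisely what the computation above establishes. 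The main (very mild) obstacle is ensuring the integrability hypothesis \eqref{integrability2} in the stationary setting, which reduces to a standard moment condition on ${\mathbb Q}$; beyond that, the proof is a direct specialization.
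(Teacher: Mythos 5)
Your proposal is correct and follows exactly the route the paper intends: the corollary is stated as an immediate specialization of Theorem \ref{T16.1.4} to the stationary case, with the paragraph preceding it recording precisely the computation you carry out (namely that $\eta\equiv\gamma$ makes $\overline\varphi^{(j)}_{m_1,\dots,m_k}(X,\dots,X;z,\dots,z)$ the constant $\gamma^k\int\cdots\int\varphi^{(j)}_{m_1,\dots,m_k}\,d{\mathbb Q}^k$). No gap; your observation that the integrability hypothesis is inherited from Theorem \ref{T16.1.4} matches the paper's implicit assumption.
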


Let $\varphi$ be a local functional on ${\cal P}$ with local extension $\Phi$, let $X$ be a Poisson process on ${\cal P}$ with translation regular intensity measure and let $Z$ be the corresponding Boolean model. In this section and the previous one, we have introduced and studied the mean values $\overline\varphi (X,\cdot)$ and $\overline\varphi (Z,\cdot)$ as Radon-Nikodym derivatives of the expected random measures ${\mathbb E} \sum_{P\in X} \Phi(P,\cdot)$ respectively ${\mathbb E} \Phi(Z,\cdot)$ (under appropriate integrability assumptions). In this way, the densities $\overline\varphi (X,\cdot)$ and $\overline\varphi (Z,\cdot)$ depend on the choice of the local extension $\Phi$ and it is possible that different local extensions of $\varphi$ lead to different functions $\overline\varphi (X,\cdot)$ and $\overline\varphi (Z,\cdot)$. However, for stationary $X$ and $Z$ this cannot happen since then 
\begin{align*}
\overline \varphi (X)
=\gamma \int_{{\cal P}_0}\varphi(P)\,{\mathbb Q} (d P),
\end{align*}
as follows from the more general formula above. Also, $\overline\varphi (Z,\cdot)$ can be expressed by the corresponding densities of $X$ through Corollary \ref{koro2}. Moreover, in the stationary case, there are alternative density formulas by a limit procedure (see \cite{SW}, for the case of additive functionals).

\begin{theorem}\label{limit}
Let $X$ be a stationary Poisson process on ${\cal P}$, let $Z$ be the corresponding Boolean model, and let $\varphi$ be a local functional on ${\cal P}$. Then, 
\begin{align*}
\overline \varphi (X)
=\lim_{r\to\infty} \frac{1}{V_d(rW)}\ {\mathbb E}\sum_{P\in X} \varphi (P\cap rW)
\end{align*}
and
\begin{align*}
\overline \varphi (Z)
=\lim_{r\to\infty} \frac{1}{V_d(rW)}\ {\mathbb E}\ \varphi (Z\cap rW).
\end{align*}
\end{theorem}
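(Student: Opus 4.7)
The strategy for both identities is to decompose the quantity inside the limit into a ``bulk'' term which, by the local determination of $\Phi$, coincides with the contribution of the full untruncated random measure on $\inte rW$ and therefore yields the density on division by $V_d(rW)$, plus a ``boundary'' remainder supported on $\bd rW$ which I shall show to be $O(r^{d-1}) = o(V_d(rW))$.

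For the bulk, recall from the proof of Theorem \ref{th1} that $\Phi(Q,\cdot)$ is supported on $Q$ for every polytope $Q$. Applied with $Q = P \cap rW \subseteq rW$, this gives
\[
\varphi(P \cap rW) = \Phi(P\cap rW,\inte rW) + \Phi(P\cap rW, P\cap \bd rW),
\]
and since $P \cap \inte rW = (P\cap rW)\cap \inte rW$, local determination yields $\Phi(P\cap rW,A) = \Phi(P,A)$ for every Borel $A\subseteq\inte rW$. Summing over $P\in X$, taking expectation, and using that in the stationary case $\overline\varphi(X,\cdot)$ is the constant Radon--Nikodym derivative of the translation-invariant measure $\mathbb{E}\sum_{P\in X}\Phi(P,\cdot)$ with respect to $\lambda$, one obtains
\[
\mathbb{E}\sum_{P\in X}\Phi(P,\inte rW) \;=\; \overline\varphi(X)\,V_d(rW)
\]
(using $V_d(\bd rW)=0$). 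For (2), the same decomposition applies to $Z\cap rW$ via the $GP$-extension of $\Phi$ from Section~6; local determination of that extension, combined with Theorem \ref{T16.1.4}, yields $\mathbb{E}\Phi(Z,\inte rW) = \overline\varphi(Z)\,V_d(rW)$. The $d$-homogeneous part $c_d V_d$ contributes exactly $c_d V_d(Z\cap rW)$ and is handled separately, since Lebesgue measure is already additive on $U_{GP}({\cal P})$.

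It remains to prove that the boundary remainders
\[
R_1(r) := \mathbb{E}\sum_{P\in X}\bigl|\Phi(P\cap rW, P\cap\bd rW)\bigr|,\qquad R_2(r) := \mathbb{E}\,|\Phi(Z\cap rW, Z\cap\bd rW)|
\]
are $O(r^{d-1})$. For $R_1(r)$, the representation \eqref{func3} bounds the total variation of $\Phi(Q,\cdot)$ for a polytope $Q$ by a sum over faces of $Q$ of $|f_j|\circ n(Q,\cdot)$ integrated against face volumes. Campbell's theorem with stationary intensity $\gamma$ rewrites
\[
R_1(r) = \gamma\int_{{\cal P}_0}\int_{\R^d} \bigl|\Phi\bigr|\bigl((P+x)\cap rW,\, \bd rW\bigr)\,\lambda(dx)\,\Q(dP);
\]
only $x$ with $(P+x)\cap\bd rW \neq\emptyset$ contribute, a set of Lebesgue measure $O(r^{d-1})$ for each $P$ of bounded circumradius, and the resulting $\Q$-integral is controlled by the integrability assumption \eqref{integrability2} standing in this section. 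For $R_2(r)$, the inclusion-exclusion formula \eqref{i-e-formula2} applied to the $GP$-representation of $Z\cap rW$ reduces the estimate to $R_1$-type bounds for $k$-fold intersections of particles of $X$ meeting $\bd rW$, and summability over $k$ is provided by the same exponential-series argument used in the proof of Theorem \ref{T16.1.4}.

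The main obstacle is the boundary estimate for $R_2(r)$: because $\varphi$ need not be additive, one cannot write $\varphi(Z\cap rW)$ as an inclusion-exclusion combination of $\varphi$-values on intersections of particles, and the argument must be carried out throughout at the level of the measure-valued $GP$-extension $\Phi$, specialising to $A=\R^d$ only after the boundary estimate is in place.
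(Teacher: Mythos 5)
The paper actually states Theorem \ref{limit} without proof (it only points to \cite{SW} for the additive case), so there is no in-paper argument to compare against; your proposal has to stand on its own. Its overall architecture --- split $\Phi(P\cap rW,\cdot)$ into the part on $\inte rW$, which local determination identifies with $\Phi(P,\cdot\cap\inte rW)$ and whose expected sum is exactly $\overline\varphi(X)V_d(rW)$ by translation invariance, plus a boundary remainder on $\bd rW$; and for $Z$ run the same scheme through the $GP$-extension and the exponential-series bound of Theorem \ref{T16.1.4} --- is the right one, and your closing remark that non-additivity forces the whole computation to stay at the level of the measure-valued extension is exactly the point of the theorem.

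The soft spot is the boundary estimate for $R_1(r)$. The elementary bound you sketch (translations $x$ with $(P+x)\cap\bd rW\neq\emptyset$ form a set of measure $O(r^{d-1})$, and the integrand there is controlled by \eqref{integrability2}) does not go through as stated: the total variation $|\Phi|((P+x)\cap rW,\cdot)$ is \emph{not} dominated by $|\Phi|(P,\cdot)$, because intersecting with the window creates new faces whose normal cones $n(P,rW;F,G)$ are different spherical polytopes, and $f_j$ may be unbounded on ${\wp}^{d-1}_{d-j-1}$; so a pointwise bound on the set of bad $x$ is not available. The correct bookkeeping is to apply Theorem \ref{trans1} with $Q=rW$ and $B=\bd rW$: the $m=j$ term vanishes since $\lambda_{rW}(\bd rW)=0$, and the remaining terms are exactly the mixed quantities $\Phi^{(j)}_{m,d-m+j}(P,rW;\R^d\times\bd rW)$ with $m\ge j+1$, each homogeneous of degree $d-m+j\le d-1$ in $r$. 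This gives $R_1(r)=O(r^{d-1})$ \emph{provided} $\int_{{\cal P}_0}\varphi^{(j)}_{m,d-m+j}(P,W)\,{\mathbb Q}(dP)<\infty$. Note that this is an integrability condition with the deterministic window $W$ as one argument; it is of the same type as \eqref{integrability2} but is not literally implied by it (which integrates over \emph{pairs of random particles}), so it should be stated as a hypothesis or derived separately. The same caveat propagates to $R_2(r)$, where the $k$-fold mixed measures $\Phi^{(j)}_{m_0,m_1,\dots,m_k}(rW,P_1,\dots,P_k;\cdot)$ with $m_0<d$ carry the boundary contribution. With that adjustment your proof is complete.
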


\section{Applications}

In this section, we discuss some applications coming from particularly chosen functionals. 

Namely, we consider the special case $\varphi^{(j)} (P) = {\cal H}^j_{skel}(P), j=0,...,d$. We assume that $Z$ is a stationary and isotropic Boolean model with polytopal grains satisfying condition \eqref{integrability2}. In view of the stationarity, this integrability condition reads
\begin{align}\label{integrability4}
&\int_{({\cal P}_0)^k} \varphi^{(j)}_{m_1,\dots ,m_k}(P_1,\dots ,P_k)V_d(P_1+ C)\cdots V_d(P_k+ C)\,{\mathbb Q}^k (d (P_1,...,P_k))<\infty ,\quad\quad\end{align}
for all $j,k,m_1,...,m_k$ and compact $C\subset\R^d$. Here, it is sufficient, by the Steiner formula, to replace $C$ by the unit ball $B^d$. Then, the isotropy allows to replace $P_1,...,P_k$ by rotated versions $\vartheta_1P_1,...,\vartheta_kP_k$ and to integrate over all such rotations. From the results at the end of Section 5, we obtain that \eqref{integrability4} can be replaced by the simpler condition
\begin{align}\label{integrability5}
&\int_{{\cal P}_0} {\cal H}^j_{skel}(P)V_d(P+ B^d)\, {\mathbb Q} (d P)<\infty ,\quad\quad\end{align}
for $j=0,...,d$.

Applying Corollary \ref{koro2} to this situation, we get first the two obvious equations 
\begin{equation}\label{volumedens}
\overline {\cal H}^d(Z)= \overline V_d(Z) = 1 -\E^{-\overline V_d(X)}
\end{equation} 
and
\begin{equation}\label{surfacedens}
\overline {\cal H}^{d-1}_{skel}(Z) =  2\overline V_{d-1}(Z)=2\E^{-\overline V_d(X)} \overline V_{d-1}(X)=\E^{-\overline V_d(X)}\overline {\cal H}^{d-1}_{skel}(X).
\end{equation} 
For $j<d-1$, we obtain
\begin{eqnarray*}
\overline  {\cal H}^{j}_{skel}(Z)&=&\E^{-\overline V_d(X)}\left(\overline {\cal H}^{j}_{skel}(X) -\sum_{s=2}^{d-j}\frac{(-1)^{s}}{s!}\right.\\
&&\hspace*{4mm}\times\,
\sum_{m_1,\dots, m_s=j+1\atop m_1+\dots +m_s=(s-1)d+j}^{d-1} \overline \varphi^{(j)}_{m_1,\dots ,m_s}(X,\dots,X)\Biggr)
\end{eqnarray*}
where the mixed densities $ \overline \varphi^{(j)}_{m_1,\dots ,m_s}(X,\dots,X)$ are of the form
\begin{align*}
&\overline \varphi_{ m_1,\dots,m_s}^{(j)}(X,\dots ,X)\cr
&\ =\gamma^s \int_{{\cal P}_0}\dots\int_{{\cal P}_0}\varphi^{(j)}_{m_1,\dots,m_s}(P_{1},\dots,P_{s})\,{\mathbb Q} (d P_1)\cdots \,{\mathbb Q} (d P_s)\cr
&\ =\gamma^s \int_{({\cal P}_0)^s}\sum_{F_1\in{\cal F}_{m_1}(P_1)}\cdots\sum_{F_s\in{\cal F}_{m_s}(P_s)}[F_1,...,F_s]V_{m_1}(F_1)\cdots V_{m_s}(F_s)\cr
&\quad \times{\mathbb Q}^s (d (P_1, ...,P_s)).
\end{align*}
Again, we use the isotropy to replace $P_1,...,P_s$ by rotated versions $\vartheta_1P_1,...,\vartheta_sP_s$ and integrate over all  rotations. Then, we use \eqref{rotationformula} repeatedly and get
\begin{align*}
&\overline \varphi_{ m_1,\dots,m_s}^{(j)}(X,\dots ,X)\cr
&\ =\gamma^s \int_{({\cal P}_0)^s}\sum_{F_1\in{\cal F}_{m_1}(P_1)}\cdots\sum_{F_s\in{\cal F}_{m_s}(P_s)}c^d_j\prod_{i=1}^s c_d^{m_i}V_{m_1}(F_1)\cdots V_{m_s}(F_s)\cr
&\quad \times{\mathbb Q}^s (d (P_1, ...,P_s))\cr
&\ =\gamma^s \int_{({\cal P}_0)^s}c^d_j\prod_{i=1}^s c_d^{m_i}{\cal H}^{m_i}_{skel}(P_i)\, {\mathbb Q}^s (d (P_1, ...,P_s))\cr
&\ =c^d_j\prod_{i=1}^s c_d^{m_i}\overline {\cal H}^{m_i}_{skel}(X) .
\end{align*} 

Hence, we arrive at
\begin{align}\label{jdens}
\overline  {\cal H}^{j}_{skel}(Z)&=\E^{-\overline V_d(X)}\Biggl(\overline {\cal H}^{j}_{skel}(X)  \cr
& \quad -c^d_j\sum_{s=2}^{d-j}\frac{(-1)^{s}}{s!}
\sum_{m_1,\dots, m_s=j+1\atop m_1+\dots +m_s=(s-1)d+j}^{d-1} \prod_{i=1}^s c_d^{m_i}\overline {\cal H}^{m_i}_{skel}(X)  \Biggr)
\end{align}
for $j=0,...,d-2$.

The relations \eqref{jdens} are analogous to the ones for the intrinsic volumes and can be used, together with \eqref{volumedens} and \eqref{surfacedens}, to determine (estimate) the mean values $\overline {\cal H}^{i}_{skel}(X), i=0,...,d$, from (estimates of) the quantities $\overline {\cal H}^{j}_{skel}(Z), j=0,...,d$, on the left side. Simple unbiased estimators of the latter are given by
$$
\Phi^{(j)}(Z(\omega),W), j=0,...,d,
$$
where $\Phi^{(j)}$ is the $GP$-extension of the measure ${\cal H}^{j}_{skel}$ (for a $GP$-union $Q$ of polytopes, this is, in general, not the Hausdorff measure ${\cal H}^{j}$ on the $j$-skeleton of $Q$). In order to clarify the situation, we discuss the cases which are interesting for applications, namely $d=2$ and $d=3$.

For $d=2$, we have the three equations
\begin{align}\label{d2}
\overline A(Z) &= 1 -\E^{-\overline A(X)}\cr
\overline L(Z)&=\E^{-\overline A(X)}\overline L(X)\cr
\overline N_0(Z)&=\E^{-\overline A(X)}\left(\overline N_0(X)-\frac{1}{4\pi}\overline L(X)^2\right),
\end{align}
where $A$ denotes the area, $L$ the boundary length, and $N_0$ is the number of vertices of a polytope. The $GP$-extension of $N_0$, which is used on the left side, counts all convex vertices of a set in $U_{GP}({\cal P})$ positively and all concave vertices negatively. If the intensity $\gamma$ of $X$ is estimated by one of the traditional methods (e.g. through the equation for the Euler characteristic or with the tangent count), then $\gamma^{-1}\overline N_0(X)$ gives us the mean number of vertices of the typical polytope in $X$.

For $d=3$, we obtain
\begin{align}\label{d3}
\overline V(Z) &= 1 -\E^{-\overline V(X)}\cr
\overline S(Z)&=\E^{-\overline V(X)}\overline S(X)\cr
\overline L_1(Z)&=\E^{-\overline V(X)}\left(\overline L_1(X)-\frac{\pi^2}{32}\overline S(X)^2\right)\cr
\overline N_0(Z)&=\E^{-\overline V(X)}\left(\overline N_0(X)-\frac{1}{4\pi}\overline L_1(X)\overline S(X) + \frac{\pi}{384}\overline S(X)^3\right).
\end{align}
Here, $V$ is the volume, $S$ the surface area, $L_1$ the total edge length, and $N_0$ the number of vertices. The $GP$-extensions of the latter two quantities are as follows. For $L_1$, all convex edges count positively and all concave edges count negatively. The vertices of a $GP$-union are of three kinds; convex vertices (these are vertices of one of the original polytopes), concave vertices (which arise as the intersection of three facets) and saddlepoint-like vertices (which come from intersections of edges of one polytope with facets of another). The convex and concave vertices are counted positively and the saddlepoint-like vertices are counted negatively.
Again, if $\gamma$ is obtained by one of the classical methods, the formulas can be used to get the mean number of vertices and the mean total edge length of the typical polytope in $X$. 

We remark that the formula for $\overline L_1(Z)$ has a much more intuitive meaning than the corresponding result in the classical case of intrinsic volumes. There, the corresponding formula concerns the specific integral mean curvature $\overline M(Z)$  of the Boolean model $Z$.

\section{Polyconvex grains}

Formulas for the mean total Hausdorff measure of the $j$-skeleton of the Boolean model $Z$ could also be obtained by applying the Slyvniak-Mecke formula (see e.g. Corollary 3.2.3 in \cite{SW}) to the individual $j$-faces of the grains, more precisely to the parts of the $j$-faces of a given grain of $X$ which are not covered by other grains. The resulting expressions will differ from those in \eqref{jdens} since they concern the mean total $j$-dimensional content of the visible (e.g. uncovered) parts of the $j$-faces of the grains, whereas \eqref{jdens} uses the mean values of the $GP$-extensions of the Hausdorff measure. This means, in particular, that the information contained in the non-convex boundary parts of $Z$ (like concave edges or saddlepoint-like vertices in the three-dimensional case) would not be used in this approach. Nevertheless, it would be interesting to compare the resulting formulas in both approaches to separate the information coming from these non-convex parts of $Z$. This will be part of an investigation which is in progress.

However, the advantage of the approach presented here becomes apparent if Boolean models $Z$ with polyconvex grains are considered. To be more precise, we now assume that the grains of $Z$ (i.e. the particles in the Poisson process $X$) are themselves $GP$-unions of convex polytopes. As we will indicate now, the results obtained so far hold true (under suitable integrability conditions) for Boolean models with such poly-polytopal grains. In this situation, the approach based on the Slyvniak-Mecke formula would only work if the non-convex boundary parts in $Z$ coming from the union of particles can be distinguished from the non-convex boundary parts of the individual particles, a condition which is rarely satisfied in practice.

Concerning the extension of the results to poly-polytopal grains, we first observe that, given a local functional $\varphi$ on $\cal P$ with local extension $\Phi$, the translative integral formulas from Theorem \ref{trans1} and Corollary \ref{trans2} hold true for sets $P,Q,P_1,...,P_k\in U_{GP}({\cal P})$. More precisely, if $P=\bigcup_{i=1}^l P_i, Q=\bigcup_{k=1}^n Q_k$ are standard representations of $P,Q\in U_{GP}({\cal P})$, then, for $\lambda$-almost all $x$, $P\cap Q^x\in U_{GP}({\cal P})$ and $P\cap Q^x = \bigcup_{i=1}^l\bigcup_{k=1}^n (P_i\cap Q_k^x)$ is a standard representation. For Borel sets $A,B\in{\cal B}$, we therefore obtain from the inclusion-exclusion formula \eqref{i-e-formula2} and Theorem \ref{trans1}, that
\begin{align*}
\int _{{\mathbb R}^d}& \Phi^{(j)}(P \cap Q^x, A \cap B^x) \, \lambda(d x)\cr
 &= \sum_{r=1}^l\sum_{s=1}^n(-1)^{r+s}\sum_{1\le i_1<\dots < i_r\le l}\ \sum_{1\le j_1<\dots < j_s\le n}\cr
 &\quad\int _{{\mathbb R}^d} \Phi^{(j)}((P_{i_1}\cap\cdots\cap P_{i_r}) \cap (Q_{j_1}\cap\cdots\cap Q_{j_s})^x, A \cap B^x) \, \lambda(d x)\cr
 &=\sum_{r=1}^l\sum_{s=1}^n(-1)^{r+s}\sum_{1\le i_1<\dots < i_r\le l}\ \sum_{1\le j_1<\dots < j_s\le n}\cr
 &\quad \sum_{m=j}^d \Phi_{m,d-m+j}^{(j)} (P_{i_1}\cap\cdots\cap P_{i_r},Q_{j_1}\cap\cdots\cap Q_{j_s};A \times B) \cr
 &=\sum_{m=j}^d \Phi_{m,d-m+j}^{(j)} (P,Q;A \times B)
\end{align*} 
where the finite signed measure $\Phi_{m,d-m+j}^{(j)}(P,Q;\cdot)$ is defined by
\begin{align*}
 &\Phi_{m,d-m+j}^{(j)} (P,Q;\cdot) = \sum_{r=1}^l\sum_{s=1}^n(-1)^{r+s}\cr
 &\ \sum_{1\le i_1<\dots < i_r\le l}\ \sum_{1\le j_1<\dots < j_s\le n}
  \Phi_{m,d-m+j}^{(j)} (P_{i_1}\cap\cdots\cap P_{i_r},Q_{j_1}\cap\cdots\cap Q_{j_s};\cdot)  .
  \end{align*}
Notice that the formulas
\begin{align*}
&  \Phi_{j,d}^{(j)} (P,Q;A \times B) = \Phi^{(j)}(P,A) \lambda (Q\cap B), \\
&  \Phi_{d,j}^{(j)}(P,Q;A \times B) = \lambda (P\cap A) \Phi^{(j)}(Q,B)
\end{align*}
still hold. The iterated translative integral formula \eqref{mixedint} for poly-polytopal sets follows in a similar way, as do the two translative integral formulas from Corollary \ref{trans2}. The essential properties of the mixed measures and functionals carry over from polytopes to $GP$-unions, namely homogeneity, symmetry and decomposability.

The results on Poisson processes in Section 7 carry over to processes on ${\cal U}_{GP}({\cal P})$, if we replace the integrability condition \eqref{integrability2} by the stronger condition
\begin{align}\label{integrabilitynew}
&\int_{({\cal P}_0)^k}\int_{({\mathbb R}^d)^k} \|\Phi^{(j)}_{m_1,\dots ,m_k}(P_1,\dots ,P_k;\cdot)\|{\bf 1}\{P_1^{x_1}\cap C\not=\emptyset\} \eta (P_1,x_1)\cdots \cr
&\times {\bf 1}\{P_k^{x_k}\cap C\not=\emptyset\} \eta (P_k,x_k)\,\lambda^k (d(x_1,...,x_k))\,{\mathbb Q}^k (d (P_1,...,P_k))<\infty ,\quad\quad\end{align}
for any compact $C\subset\R^d$, where $\|\mu \|$ denotes the total variation norm of the measure $\mu$. Similarly, the results on Boolean models from Section 8 hold true, for poly-polytopal grains, if in the assumptions \eqref{integrability2} is replaced by \eqref{integrabilitynew}. Thus also the formulas \eqref{d3} and \eqref{d2} are valid for poly-polytopal grains and the interpretation (respectively, the evaluation) of the left sides remains the same. On the ride side, one has to observe that $\overline L_1(X)$ describes no longer the mean total edge length of the typical poly-polytopal grain of $X$ (if the intensity $\gamma$ is known) but the $GP$-extension of the edge length, counting convex edges positively and concave edges negatively. Similarly, $\overline N_0(X)$ is $\gamma$ times the $GP$-extended mean vertex number of the typical grain of $X$.

\end{document}